\definecolor{maroon}{RGB}{250, 0, 150}
\definecolor{orange}{RGB}{255, 80, 0}
\newtheorem{theorem}{Theorem}
\newtheorem{lemma}[theorem]{Lemma}
\newtheorem{proposition}[theorem]{Proposition}
\theoremstyle{definition}
\newtheorem{observation}[theorem]{Observation}
\theoremstyle{remark}
\newtheorem{remark}[theorem]{Remark}
\numberwithin{equation}{section}
\newcommand{\R}{\mathbb{R}}
\newcommand{\C}{\mathcal{C}}
\newcommand{\calP}{\mathcal{P}}
\newcommand{\ke}{$K_{3,3}$ }
\newcommand{\dfn}[1]{\textit{#1}}
\newcommand{\cthree}[3]{#1 \oplus #2 \ominus #3}
\newcommand{\mt}{\emptyset}
\newcommand{\disk}{\Delta}
\newcommand{\ko}{K_{3,3,1}}
\newcommand{\bibtitle}[1]{\textit{#1}}
\begin{document}

\title{On the number of links in a linearly embedded $K_{3,3,1}$}

\author{Ramin Naimi}
\address{Occidental College, Los Angeles, CA 90041}
\email{rnaimi@oxy.edu}

\author{Elena Pavelescu}
\address{Occidental College, Los Angeles, CA 90041}
\email{pavelescu@oxy.edu}

\thanks{This research was supported in part by NSF grant DMS-0905300.}

%    General info
\subjclass[2000]{Primary 05C10, 57M25}
\keywords{spatial graph, intrinsically linked, linear embedding, straight-edge embedding, $K_{3,3,1}$. }
\date{\today}

\maketitle

\begin{abstract}
We show there exists a linear embedding of $K_{3,3,1}$ with $n$ nontrivial 2--component links if and only if $n = 1, 2, 3, 4$, or 5.

\end{abstract}

% ---------INTRODUCTION --------------

\section{Introduction}

In the early 1980's,  Conway and Gordon~\cite{CG}, and Sachs~\cite{Sa1, Sa2},  showed that $K_6$, the complete graph on six vertices, is \dfn{intrinsically linked}, i.e., every embedding of it in $\R^3$ (or $S^3$) contains two disjoint cycles that form a nontrivial link.
Sachs~\cite{Sa1, Sa2} 
also showed that six other graphs, including
$K_{3,3,1}$ (the tri-partite graph on 3, 3, 1 vertices) are intrinsically linked.
In other words, the minimum number of nontrivial 2--component links in any embedding of any of these graphs is one.
It is not difficult to see that
every graph can be embedded such that every pair of disjoint cycles forms a nontrivial link,
i.e.,
the attained maximum number of nontrivial 2--component links 
among all embeddings of any graph is  the number of pairs of disjoint cycles in the graph.
Thus, there exist embeddings of $K_6$ and $K_{3,3,1}$ with, respectively, 10 and 9 nontrivial 2--component links.
Fleming and Mellor~\cite{FlMe} found either exact values, or lower- and upper-bounds, for the minimum number of nontrivial links in 
$k$--partite graphs on 8 vertices, and in some larger complete bipartite graphs, embedded in $\R^3$.
However, if one restricts attention to \dfn{linear embeddings} (or \textit{straight-edge embeddings}) of graphs, 
i.e., embeddings of graphs in $\R^3$ in which every edge is a straight line segment,
then these minimum and maximum values change.
Hughes~\cite{Hu}, and, independently, Huh and Jeon~\cite{HuJe}
showed that every linear embedding of $K_6$ contains exactly 1 or 3 nontrivial 2--component links.
Work has also been done on the number of nontrivial knots, and links with more than two components, 
in linearly embedded graphs.
Ramirez Alfonsin~\cite{RA} showed that every linearly embedded $K_7$ contains a trefoil knot.
Huh~\cite{Hu2} showed that every linearly embedded $K_7$ 
contains at most three figure eight knots.
Naimi and Pavelescu~\cite{NaPa} showed that every linearly embedded $K_9$ contains
a nonsplit 3--component link.
None of these results hold if one does not require the embeddings to be linear.
The main result of this paper is:

\begin{theorem}
\label{maintheorem}
There exists a linear embedding of $K_{3,3,1}$ with $n$   nontrivial 2--component links if and only if $n = 1, 2, 3, 4$, or 5.
\end{theorem}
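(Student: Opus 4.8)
The plan is to reduce the theorem to an analysis of a fixed list of nine candidate links, and then to handle the two implications separately: realizability of each value $n\in\{1,2,3,4,5\}$ by explicit constructions, and an upper bound ruling out $n\geq 6$. First I would pin down the combinatorics of disjoint cycles in $\ko$. Writing the two size-$3$ parts as $A=\{a_1,a_2,a_3\}$ and $B=\{b_1,b_2,b_3\}$ and the singleton part as $\{c\}$, every triangle must use one vertex from each part and hence contains $c$; consequently two disjoint cycles can never both be triangles. A short case check on the seven vertices shows that the only disjoint cycle pairs are the nine pairs $(T_{ij},Q_{ij})$, where $T_{ij}=a_ib_jc$ is a triangle through $c$ and $Q_{ij}$ is the unique $4$--cycle on the complementary vertices $\{a_k,a_l\}\cup\{b_m,b_n\}$. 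I would record these as a $3\times 3$ array, so that counting nontrivial links amounts to counting the nonsplit pairs in this array. For a linear embedding each $T_{ij}$ bounds a flat triangular disk, so $\mathrm{lk}(T_{ij},Q_{ij})$ equals the signed number of edges of $Q_{ij}$ piercing that disk, and each piercing is decided by the sign of a determinant, i.e.\ purely by the order type (chirotope) of the seven points. Thus every candidate link reduces to a short combinatorial computation, a nonsplit pair is exactly one with $\mathrm{lk}\neq 0$, and the value $\mathrm{lk}=\pm 2$ can occur (two edges of the quadrilateral piercing the disk with the same sign), which is what will let the total be even.

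For the ``if'' direction I would produce five explicit linear embeddings, given by integer coordinates for $a_1,\dots,b_3,c$, realizing exactly $1,2,3,4$, and $5$ nonsplit pairs. A convenient strategy is to start from a near-convex configuration, which makes most of the nine quadrilaterals miss their triangles and typically yields a single link (consistent with intrinsic linkedness), and then to translate one or two vertices across a triangle's plane to switch individual piercings on or off. For each configuration the verification is the routine determinant computation of the nine linking numbers described above.

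The heart of the proof, and the step I expect to be the main obstacle, is the upper bound: no linear embedding of $\ko$ has six or more nonsplit pairs. Here I would exploit that all nine triangles share the vertex $c$ and that the nine quadrilaterals live on the six points $A\cup B$, so the piercing data is heavily correlated across the array: the sign patterns of the six points relative to the various triangle planes cannot be prescribed independently. The plan is to organize the argument by the convex-position type of the seven points (how many lie on the convex hull, and where $c$ sits relative to $A\cup B$), and within each type to bound how many of the nine piercing conditions can hold at once, using Radon partitions to forbid the simultaneous piercings that an array with six or more linked entries would require. I would also bring in the mod--$2$ Conway--Gordon--Sachs invariant for the Petersen--family graph $\ko$, namely $\sum_{i,j}\mathrm{lk}(T_{ij},Q_{ij})^2\equiv 1\pmod 2$, both to reconfirm $n\geq 1$ and to force the number of odd--linking entries to be odd; together with the geometric cap this should leave only $n\le 5$, with even totals accounted for by the $\mathrm{lk}=\pm 2$ entries. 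Establishing the geometric cap cleanly, rather than by an unilluminating exhaustive enumeration of order types, is the part I expect to demand the most care.
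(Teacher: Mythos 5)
Your setup is sound and agrees with the paper's: the nine triangle--quadrilateral pairs $(T_{ij},Q_{ij})$ are indeed the only disjoint cycle pairs in $K_{3,3,1}$, every linking number is determined by the order type of the seven points, the total linking number is odd in every embedding (the paper's Lemma~\ref{oddtotallinking}), and explicit linear embeddings realizing $n=1,\dots,5$ do exist (the paper exhibits them). But the decisive half of the theorem is the cap $n\le 5$, and there you have a plan, not an argument. ``Organize by convex-position type of the seven points and use Radon partitions to forbid simultaneous piercings'' is, concretely, an enumeration over rank-4 oriented matroids on seven elements --- exactly the ``unilluminating exhaustive enumeration'' you say you want to avoid, and exactly what the authors report their \emph{first}, computer-assisted proof was. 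You supply no lemma that prunes this enumeration to a human-checkable case analysis, and no mechanism by which Radon-type constraints alone would stop at five: parity is useless here, since seven links of linking number one give an odd total, so nothing in your toolkit excludes $n=7$. Note also two smaller gaps you elide: that ``nonsplit $\Leftrightarrow$ nonzero linking number'' for these linear links is itself a lemma (the paper's Lemma~\ref{0linking}), and that even totals require knowing \emph{at most one} link can have linking number~2 (Lemma~\ref{2linking}, proved via the circuit-elimination axiom), which you never claim.

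What replaces your missing step in the paper is a genuinely different mechanism. The $K_{3,3}$ subgraph is projected onto a small sphere centered at the degree-6 vertex~7, so that a triangle $7XY$ can link its complementary quadrilateral only if the geodesic edge $XY$ carries an under-strand at a crossing. One then proves structural facts about such geodesic immersions: at most nine crossings (one per quadrilateral, Lemma~\ref{atmost9}), an odd number of crossings (Lemma~\ref{oddcrossing}), and at least two crossing-free edges (Lemma~\ref{2edges}); already this caps the link count at seven and settles everything with five or fewer crossings. The remaining 7- and 9-crossing cases are killed by non-realizability lemmas (Lemmas~\ref{NRLemma}, \ref{CorNR2}, \ref{CorNR1}): certain alternating over/under patterns on two triangles or along a path cannot arise as the projection of any linear embedding. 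That projection-plus-crossing-pattern structure is the idea your proposal lacks, and without it (or the completed machine enumeration it replaces) your upper bound does not get off the ground.
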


We also show that in a linearly embedded $K_{3,3,1}$ with an odd number of nontrivial links all nontrivial links are Hopf links, while
in a linearly embedded $K_{3,3,1}$ with an even number of nontrivial links one nontrivial link is a $(2,4)$--torus link and the rest are Hopf links.

The results of \cite{NaPa} and \cite{RA} were obtained by 
exhaustively checking large numbers of oriented matroids using computer programs. 
We initially proved Theorem~\ref{maintheorem} using oriented matroid theory and a computer program as well. 
The proof we present here uses basic oriented matroid theory; however, it does not rely on any computer program.

We have chosen to study
the graph $\ko$  since it belongs to the Petersen graph family,
i.e., the seven graphs constituting the set of all minor minimal intrinsically linked graphs \cite{RST}.
Furthermore, $K_6$ and $\ko$ together are necessary and sufficient 
for generating the entire Petersen graph family using triangle-Y moves \cite{Sa2}.
If the effect of triangle-Y moves on the number of links in a linearly embedded graph has a ``nice'' characterization,
it, together with our knowing the number of links in $K_6$ and $\ko$, might provide a quick way of finding the number of links in linear embeddings of the remaining Petersen family graphs.

\medskip

% ----------------Notation --------------

We now introduce some notation and terminology that will be used throughout the paper.
Let $S^2 \subset \R^3$ be a sphere centered at a point $O$.
For each point $P \in \R^3$ \dfn{outside} $S^2$, i.e., in the unbounded component of $\R^3 - S^2$,
the \dfn{projection of $P$ onto $S^2$} is 
the point $P'$ where the line segment $PO$ intersects $S^2$.
Given any pair of points $P, Q \in \R^3$ such that the line segment $PQ$ lies entirely outside $S^2$, 
the projection of $PQ$ onto $S^2$ is a geodesic arc $P'Q' \subset S^2$ whose length is less than $\pi$ times the radius of $S^2$.
(To give the reader some perspective: 
we use this setup to project the \ke subgraph of a linearly embedded $K_{3,3,1}$ onto a small sphere centered around the vertex of degree~6 in $K_{3,3,1}$.)

Given two non-antipodal points $V, W \in S^2$,
we denote by  $\mathcal{C}_{VW}$ the great circle determined by $V$ and $W$ on $S^2$.
We say two points $X, Y \in S^2 - \C_{VW}$ are on the \dfn{same side} of $\C_{VW}$ if 
they lie on the same component of $S^2 - \C_{VW}$;
otherwise we say $X$ and $Y$ are on \dfn{different sides} of $\C_{VW}$.

An \dfn{edge} between two non-antipodal points $V, W \in S^2$
is the shortest geodesic arc $VW$ from $V$ to $W$.
Two edges $VW$ and $XY$ in $S^2$ \dfn{cross}
if they intersect in an interior point.
Suppose $V, W, X, Y \in S^2$ are projections of points $\tilde{V}, \tilde{W}, \tilde{X}, \tilde{Y} \in \R^3$
such that $\tilde{V} \tilde{W}$ and $\tilde{X} \tilde{Y}$ are disjoint in $\R^3$,
while $VW$ and $XY$ cross at point $P  =VW \cap XY$ in $S^2$.
If the primage of $P$ on $\tilde{V} \tilde{W}$  is closer to the center of $S^2$ 
than is the preimage of $P$ on $\tilde{X} \tilde{Y}$,
then we say $VW$ is an \dfn{over-strand} and $XY$ is an \dfn{under-strand at $P$},
and we write $VW // XY$.
A graph is \textit{geodesically immersed} in $S^2$ if each of its edges is embedded as a geodesic arc in $S^2$
with length less than half the length of a great circle.
We say a geodesically immersed graph in $S^2$,
together with under- and over-strand information at each crossing, 
is \dfn{realizable} 
if it is the projection of a linearly embedded graph in $\R^3$ that agrees with the given under- and over-strand information at every crossing.

Unless specified otherwise,
the vertices of $K_{3,3,1}$ are assumed to be labeled with $\{1, 2, 3, 4, 5, 6, 7 \}$, 
with the partition $\{1, 3, 5 \} \cup \{2, 4, 6 \} \cup \{7\}$. 
Given a linearly embedded $K_{3,3,1}$ in $\R^3$,
let $S^2$ be a 2--sphere centered at vertex~7, with a sufficiently small radius so that 
the subgraph $K_{3,3} = \ko - 7$  is entirely outside $S^2$.
Then the projection of \ke onto $S^2$
gives a geodesically immersed \ke.
By abuse of notation, we label the vertices of the immersed \ke also with $\{1, 2, 3, 4, 5, 6 \}$, 
with the partition $\{1, 3, 5 \} \cup \{2, 4, 6 \}$.

Throughout the paper,  whenever we say \textit{linking number}, we mean  the absolute value of the linking number.
And we say two disjoint, simple closed curves in $\R^3$ \textit{link} each other if they have non-zero linking number.

% ------ SECTION PRELIMINARY RESULTS --------------

\section{Preliminary Results}

%---- odd total linking number -------------

In this section we prove a number of lemmas which we will use in the next section to prove our main theorem.

\begin{lemma}
 Let $L$ be a 2--component link consisting of two linearly embedded cycles in $\R^3$ with a total of seven edges. 
 If $L$ has linking number zero, then it is a trivial link.
\label{0linking}
\end{lemma}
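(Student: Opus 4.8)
The plan is to exploit the fact that seven edges split in essentially one way between the two cycles. Since every embedded cycle needs at least three edges, and $3+4$ is the only way to write $7$ as a sum of two integers each at least $3$, one component of $L$ must be a triangle $T$ and the other a quadrilateral $Q$. The triangle $T$ spans a flat (hence convex) disk $D$ lying in its plane $\Pi$, and I will show that $T$ in fact bounds a disk disjoint from $Q$; once this is in hand the lemma follows quickly, because a quadrilateral, having fewer than six edges, is automatically unknotted, so $L$ is a split union of two unknots and therefore trivial.

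First I would put the configuration in general position so that no edge of $Q$ lies in $\Pi$ and $Q$ meets $\Pi$ transversely; since $T$ and $Q$ are disjoint, $Q$ also misses $\partial D = T$, so every point of $Q \cap D$ is an interior transverse crossing carrying a sign $\pm 1$. The key identity is that the linking number of $T$ and $Q$ equals the signed count of these crossings, and by hypothesis this sum is $0$. Because each of the four edges of $Q$ crosses $\Pi$ at most once, $Q \cap D$ consists of at most four points; moreover, reading along $Q$, the subarc joining two cyclically consecutive points of $Q \cap D$ either stays off $\Pi$ or crosses $\Pi$ only outside the triangle, and hence meets the open disk $D$ only at its two endpoints.

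The heart of the argument is a cancellation step. If $Q \cap D \neq \emptyset$, then since the signs sum to zero there are equally many $+$ and $-$ crossings, so the cyclic sequence of signs along $Q$ must contain two consecutive crossings of opposite sign. These two points are joined by a subarc $\gamma$ of $Q$ meeting $D$ only at its endpoints, so I can isotope $D$ by pushing a thin finger along $\gamma$ to eliminate both intersection points without creating new ones. This reduces $\lvert Q \cap D\rvert$ by two, and iterating (at most twice, since $\lvert Q\cap D\rvert \le 4$) produces a disk $D'$ bounded by $T$ with $D' \cap Q = \emptyset$. Thickening $D'$ to a ball containing $T$ and disjoint from $Q$ then exhibits a $2$--sphere separating the two unknotted components, so $L$ is the trivial link.

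I expect the finger-move cancellation to be the only delicate point: one must check that the tube swept out along $\gamma$ can be taken thin enough to avoid the rest of $Q$ (possible since $Q$ is embedded and $\gamma \subset Q$) and that it genuinely removes the pair of opposite-sign intersections. Everything else—the forced $3+4$ split, the interpretation of the linking number as a signed intersection count with $D$, and the unknottedness of a quadrilateral—is standard, so the proof reduces to making this innermost-arc cancellation precise.
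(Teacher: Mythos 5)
Your overall skeleton (the forced $3+4$ split, interpreting the linking number as a signed count of intersections of the quadrilateral $Q$ with the flat disk $D$ bounded by the triangle $T$, and then isotoping one component off the disk) matches the paper's strategy, but the cancellation step --- which you correctly identify as the heart of the argument --- has a genuine gap, and it is exactly the step where all the content of the lemma lives. The claim that two cyclically consecutive intersection points of opposite sign can always be eliminated by ``pushing a thin finger of $D$ along $\gamma$'' is false as a general-position statement. The Whitehead link is the standard counterexample: its unknotted component bounds a disk $D$ that the other component $Q$ meets transversely in exactly two points of opposite sign, adjacent along $Q$, with the connecting subarc meeting $D$ only at its endpoints --- every hypothesis you actually invoke at the cancellation step holds --- yet the link is not split. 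The real obstruction is not whether the swept tube can be made thin enough to miss $Q$ (it always can); it is whether the bigon region bounded by $\gamma$ and an arc $\delta \subset D$ joining the two points can be realized as an embedded disk whose interior misses both $D$ and $Q$. When $\gamma$ clasps the triangle $T$, as in the Whitehead link, no such disk exists. Put differently, your finger move is an ambient isotopy, so it cannot change whether the pair of intersections is removable; it merely slides one intersection point along $\gamma$ until it sits next to the other, leaving the same obstruction in place. Since your argument never uses straightness of the edges at this step, it would ``prove'' that every linking-number-zero link whose components meet the disk this way is split, which is false.

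The missing ingredient is precisely where linearity must enter, and it is what the paper's proof supplies: explicit flat or piecewise-flat disks through which to push the quadrilateral, whose embeddedness and controlled intersection with $\Delta ABC$ are verified by convexity and coplanarity arguments rather than general position. Concretely, with $k=2$ and the two piercing edges adjacent, the paper pushes $Q$ through the flat triangle $\Delta EXY$; with the two piercing edges disjoint, it first uses linking number zero to conclude that the vertices $E$ and $F$ lie on the same side of the plane of $ABC$, and only then pushes through $\Delta XEF \cup \Delta XYF$ after checking these triangles meet $\Delta ABC$ only in $X$ and the segment $XY$; with $k=4$, it observes that the chords $XX'$ and $YY'$ inside the disk cannot both cross and pushes through $\Delta DXX'$ and $\Delta FYY'$. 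Note also that your iteration scheme breaks down after one cancellation: the modified disk (or curve) is no longer built from straight segments, so the linear-geometry facts you rely on (each edge meets $\Pi$ at most once, subarcs meet $D$ only at endpoints) no longer apply to the next round. To repair your write-up you would need to replace the generic finger move by constructions of explicit embedded bigons using the straight-edge structure --- which is, in essence, to reproduce the paper's case analysis.
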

\begin{proof}
Since $L$ has seven edges, its components are a triangle, $ABC$, and a quadrilateral, $DEFG$. 
We can assume the seven vertices are in general position.
Let $\disk ABC$ denote the plane region bounded by the triangle $ABC$, 
and $k$ the number of edges of $DEFG$ that intersect $\disk ABC$.
Since $L$ has linking number zero, $k$ must be 0, 2 or 4.
If $k=0$, then $L$ is clearly trivial.

Suppose $k=2$. Let $X$ and $Y$ be the two points where $DEFG$ intersects $\disk ABC$.
If the two edges of $DEFG$ that intersect $\disk ABC$ are adjacent,
then we can assume $X$ and $Y$ lie on the edges $DE$ and $EF$.
So we can isotop $DEFG$ through the disk $\disk EXY$ to make it disjoint from $\disk ABC$;
hence $L$ is trivial.
If the two edges of $DEFG$ that intersect $\disk ABC$  are disjoint, 
then we can assume $X$ and $Y$ lie on the edges $DE$ and $FG$, respectively.
Now, since $L$ has linking number zero, the vertices $E$ and $F$ are on the same side of the plane determined by $A, B$ and $C$.
Hence $\disk XEF$ intersects $\disk ABC$ in only the point $X$.
Also,  $\disk XYF \cap \disk ABC = XY$.
So we can isotop $DEFG$ through the (topological) disk $\disk XEF \cup \disk XYF$
to make it disjoint from $\disk ABC$. Hence $L$ is trivial.

Now suppose $k = 4$.
Let $X, Y, X', Y'$ be the four points where $DE$, $EF$, $DG$, and $FG$, respectively, intersect $\disk ABC$.
Since at most two edges of the quadrilateral $XX'Y'Y$ cross each other, without loss of generality we assume $XX'$ does not intersect $YY'$. 
Then we can isotop $DEFG$ through $\disk DXX'$ and $\disk FYY'$ to make it disjoint from $\disk ABC$.
Hence $L$ is trivial.

\end{proof}

\begin{lemma}
 In every embedding $G$ of $K_{3,3,1}$, the sum of the linking numbers of all 2--component links in $G$ is odd.
\label{oddtotallinking}
\end{lemma}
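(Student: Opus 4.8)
The plan is to reduce the statement to the known Conway--Gordon--Sachs invariant for $K_6$ and then transfer that parity computation to $K_{3,3,1}$ via a contraction. Recall that the Conway--Gordon--Sachs theorem is really a statement mod~2: for $K_6$, the sum over all pairs of disjoint triangles of their linking numbers is always odd, and this sum is an embedding-invariant mod~2 because a single edge-crossing change alters the linking number of exactly one pair by $\pm 1$, hence flips the total parity by an even amount only when summed correctly. So the first step I would carry out is to establish, or simply invoke, the mod~2 invariance principle: for any fixed graph $G$, the quantity $\sum_{\lambda} \mathrm{lk}(\lambda) \bmod 2$, summed over all pairs $\lambda$ of disjoint cycles, is independent of the embedding, since passing between any two embeddings of $G$ can be achieved by a finite sequence of crossing changes, each of which changes exactly one linking number by $\pm 1$ and therefore preserves the global sum modulo~2.

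Granting that invariance, the second step is to compute the invariant for one convenient embedding of $K_{3,3,1}$. The natural choice is to build the embedding from a standard embedding of $K_6$: label the degree-6 vertex of $K_{3,3,1}$ as $7$, and note that $K_{3,3,1}$ is obtained from $K_7$ by deleting the three edges inside the part $\{1,3,5\}$ and the three edges inside $\{2,4,6\}$. I would instead relate $K_{3,3,1}$ directly to $K_6$: contracting (or otherwise controlling) the apex vertex $7$ sets up a correspondence between certain disjoint-cycle pairs in $K_{3,3,1}$ and the disjoint-triangle pairs in $K_6$ whose parity is known to be odd. Concretely, one exhibits a single explicit linear (or even nonlinear) embedding of $K_{3,3,1}$, directly counts the linking numbers of its $2$-component links, and verifies the total is odd. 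The subgraph structure of $K_{3,3,1}$ guarantees it contains a $K_6$-like family of disjoint cycle pairs through vertex $7$, so the odd contribution from $K_6$ survives.

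The cleanest route, and the one I would actually write up, is to prove the mod~2 invariance intrinsically and then evaluate on the standard ``book'' or ``round'' embedding where all but one linking number vanish. The key combinatorial step is to enumerate the pairs of disjoint cycles in $K_{3,3,1}$ and organize them so that the parity count collapses: pairs of disjoint cycles that both avoid vertex $7$ lie in the $K_{3,3}$ subgraph, which is not intrinsically linked and contributes even total parity in a suitable symmetric embedding, while the pairs using vertex $7$ form the ``$K_6$-part'' that carries the odd contribution. I expect the main obstacle to be the bookkeeping: correctly identifying all disjoint cycle pairs in $K_{3,3,1}$, separating them into the $7$-free family and the $7$-containing family, and confirming that the $7$-free family contributes an even sum while the $7$-containing family reduces to the odd Conway--Gordon--Sachs sum for $K_6$. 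Once that partition is verified for a single embedding, the mod~2 invariance established in the first step immediately yields the result for \emph{every} embedding $G$, which is exactly the claim of the lemma.
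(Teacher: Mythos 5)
Your overall strategy---evaluate the total linking number mod~2 on one explicit embedding, then argue invariance under crossing changes---is the same as the paper's, but your invariance step, which is the entire content of the lemma, is wrong as stated. You invoke a general principle: for \emph{any} graph, the sum of linking numbers over all pairs of disjoint cycles is an embedding invariant mod~2, ``since \ldots\ each [crossing change] changes exactly one linking number by $\pm 1$ and therefore preserves the global sum modulo~2.'' This is self-contradictory: if a crossing change altered exactly one linking number by $\pm 1$, the total would change by $\pm 1$ and the parity would \emph{flip}. The principle is also false in general: the graph consisting of two disjoint triangles has a single pair of disjoint cycles, which can be embedded with linking number $0$ or $1$, so its mod~2 sum is not an embedding invariant. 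The combinatorial heart of the paper's proof is precisely the graph-specific fact you are missing: in $K_{3,3,1}$, for every pair of disjoint edges there are exactly \emph{two} 2--component links $L_1, L_2$ such that each component of each $L_i$ contains one of the two edges. A crossing change only affects links of this kind, so it changes the total linking number by $0$, $2$, or $-2$, and parity is preserved. Without verifying this ``exactly two'' (or at least ``an even number'') property---which must be checked for $K_{3,3,1}$ itself and cannot be cited as a universal fact---the argument has no content.

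Your evaluation step is also more complicated than it needs to be, and partly misdirected. The partition of disjoint-cycle pairs into a ``$7$-free family'' and a ``$7$-containing family'' is vacuous: $K_{3,3}$ contains no pair of disjoint cycles at all, since its cycles have length at least $4$ and two disjoint ones would require at least $8$ vertices. Hence every 2--component link in $K_{3,3,1}$ consists of a triangle through vertex~$7$ and its complementary quadrilateral, and there is no ``even contribution'' to verify. Likewise, $K_{3,3,1}$ does not contain $K_6$ as a subgraph (the two graphs are related by a triangle-Y move, not by containment), so transferring the Conway--Gordon--Sachs parity ``via contraction'' is not a step you can make precise without substantial extra work. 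The paper simply exhibits one embedding with a single nontrivial link of linking number~$1$ (its Figure~4(a), with vertex~$7$ high above the plane of the diagram) and then applies the crossing-change argument described above.
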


\begin{proof}
We use an argument similar to Sachs' \cite{Sa1} for $K_6$. There exists an  embedding $G_0$ of $K_{3,3,1}$ that contains exactly one nontrivial link, with linking number 1 
(e.g., see Figure~\ref{12345}(a), where vertex~7 is assumed to be ``high above'' the diagram).
An arbitrary embedding $G$ of $K_{3,3,1}$ can be obtained from $G_0$ by ambient isotopy plus a finite number of crossing changes (i.e., two edges ``passing through'' each other).
A crossing change between two edges changes the linking number of a 2--component link $L$ 
if and only if the two edges are disjoint and each component of $L$ contains one of the two edges;
furthermore, when this is the case, the linking number of $L$ changes by $\pm 1$.
For every pair of disjoint edges in $K_{3,3,1}$, there are exactly two 2--component links $L_1$ and $L_2$ such that each component of each $L_i$ contains one of the two edges.
Hence, with each crossing change, the total linking number changes by $0$, $2$, or $-2$. 
Thus the total linking number in $G$ has the same parity as in $G_0$, i.e., it's odd.
\end{proof}

\begin{observation}
\label{smallarc}
Let  $\mathcal{C}$ represent a great circle on the 2--sphere $S$, 
and denote by $S^+$ and $S^-$ the two hemispheres of $S$ determined by $\mathcal{C}$,
so that $S^+\cap S^-=\C$.
If $X, Y\in S^+$ are two non-antipodal points, then $XY \subset  S^+$.
\end{observation}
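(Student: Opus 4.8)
The plan is to reduce the statement to a single half-space inequality. Let $O$ be the center of $S$. The great circle $\C$ is the intersection of $S$ with a plane through $O$; let $\mathbf n$ be a unit normal to that plane, oriented so that
\[
S^+ = \{\, P \in S : \langle P - O, \mathbf n\rangle \geq 0 \,\}, \qquad \C = \{\, P \in S : \langle P - O, \mathbf n\rangle = 0 \,\}.
\]
Define $f(P) = \langle P - O, \mathbf n\rangle$, which is linear in the displacement $P - O$. The hypothesis $X, Y \in S^+$ then says exactly $f(X) \geq 0$ and $f(Y) \geq 0$, and the desired conclusion $XY \subset S^+$ is equivalent to $f(Z) \geq 0$ for every point $Z$ on the short geodesic arc $XY$.

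The key geometric observation is that every point of the short arc $XY$ lies in the planar convex cone spanned by the two rays $OX$ and $OY$; equivalently, its direction from $O$ is a \emph{nonnegative} combination of the directions of $OX$ and $OY$. Concretely, if $\theta$ denotes the angle between the vectors $X - O$ and $Y - O$, then $\theta \in [0, \pi)$ precisely because $X$ and $Y$ are non-antipodal, and each point $Z$ of the short arc can be written, via the spherical interpolation formula, as
\[
Z - O = \frac{\sin((1-s)\theta)}{\sin\theta}\,(X - O) + \frac{\sin(s\theta)}{\sin\theta}\,(Y - O), \qquad s \in [0,1].
\]
Both coefficients are nonnegative, since $\sin\theta > 0$ and the arguments $(1-s)\theta$ and $s\theta$ lie in $[0,\theta] \subset [0,\pi)$. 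I would emphasize that this is exactly the step that uses non-antipodality: along the \emph{long} arc, or when $X$ and $Y$ are antipodal, these coefficients need not stay nonnegative, and the conclusion can genuinely fail.

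It then remains only to apply $f$. Since $f$ is linear in $P - O$, the displayed expression gives
\[
f(Z) = \frac{\sin((1-s)\theta)}{\sin\theta}\,f(X) + \frac{\sin(s\theta)}{\sin\theta}\,f(Y) \geq 0,
\]
because each coefficient and each of $f(X), f(Y)$ is nonnegative. Hence $Z \in S^+$ for all $s \in [0,1]$, so $XY \subset S^+$. The degenerate cases are immediate: if $X = Y$ the arc is a point, and if $X, Y \in \C$ the arc lies in $\C \subset S^+$. The only real content is the nonnegativity of the interpolation coefficients, which I regard as the main point to check carefully; the rest is simply the remark that a closed hemisphere is the intersection of $S$ with a half-space whose bounding plane passes through $O$.
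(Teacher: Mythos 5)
Your proof is correct. Note that the paper itself supplies no proof of this statement---it is labeled an Observation and treated as self-evident---so there is no argument of the authors' to compare against; your write-up simply makes explicit the justification they take for granted. The route you chose (a closed hemisphere is the intersection of $S$ with a half-space $\langle P-O,\mathbf n\rangle \geq 0$ bounded by a plane through the center, and the short arc stays in that half-space because the spherical interpolation coefficients $\sin((1-s)\theta)/\sin\theta$ and $\sin(s\theta)/\sin\theta$ are nonnegative for $\theta\in(0,\pi)$) is the natural formalization, and you correctly isolate where non-antipodality is used: it guarantees $\sin\theta>0$ and that the arc is the one lying in the convex cone spanned by $OX$ and $OY$, which is exactly what fails for the long arc or for antipodal points.
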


\begin{lemma}
A geodesically immersed \ke in $S^2$ has at most nine edge crossings. 
\label{atmost9}
\end{lemma}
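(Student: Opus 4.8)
The plan is to reduce the global bound to a local statement about a single quadrilateral and then count. First I would observe that, because every edge is a geodesic arc shorter than a semicircle, two edges that share a vertex $v$ cannot cross at an interior point: the two great circles carrying them meet only at $v$ and its antipode, and neither short arc reaches the antipode, so the arcs meet only at the shared vertex $v$. Hence every crossing of the immersed $K_{3,3}$ occurs between two \emph{disjoint} (vertex-disjoint) edges.

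Next I would organize the disjoint edges. With parts $\{1,3,5\}$ and $\{2,4,6\}$, a pair of disjoint edges uses exactly two vertices from each part, and those four vertices span a unique $4$-cycle (a $K_{2,2}$). Choosing two vertices from each part gives $\binom{3}{2}\binom{3}{2}=9$ such $4$-cycles, and each $4$-cycle has exactly two pairs of opposite edges; together these account for all $9\cdot 2=18$ pairs of disjoint edges, each arising exactly once. So it suffices to bound, within each of these nine $4$-cycles, the number of crossing pairs of opposite edges.

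The crux is the following local claim: in a geodesic $4$-cycle $PQRS$ on $S^2$ with short edges, at most one of the two pairs of opposite edges $\{PQ,RS\}$ and $\{QR,SP\}$ can cross. I would argue this directly. Suppose $PQ$ crosses $RS$. The crossing is an interior point of arc $RS$ lying on the great circle $\C_{PQ}$; since a short arc meets a great circle at most once (its carrying great circle meets $\C_{PQ}$ in two antipodal points, only one of which a subarc of length $<\pi$ can contain), neither $R$ nor $S$ lies on $\C_{PQ}$, and arc $RS$ is not contained in either open hemisphere bounded by $\C_{PQ}$. By Observation~\ref{smallarc} this forces $R$ and $S$ onto opposite sides of $\C_{PQ}$, say $R\in S^{+}$ and $S\in S^{-}$. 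Now examine the opposite pair $\{QR,SP\}$. Both $P$ and $Q$ lie on $\C_{PQ}$, so by Observation~\ref{smallarc} arc $QR$ lies in $\overline{S^{+}}$ meeting $\C_{PQ}$ only at $Q$, while arc $SP$ lies in $\overline{S^{-}}$ meeting $\C_{PQ}$ only at $P$. Since the open hemispheres are disjoint and $P\neq Q$, the arcs $QR$ and $SP$ are disjoint, so this pair does not cross; hence at most one pair crosses.

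Finally, combining the pieces: each of the nine $4$-cycles contributes at most one crossing, so the immersed $K_{3,3}$ has at most nine edge crossings. The main obstacle is the local quadrilateral claim in the third paragraph; everything else is bookkeeping. The one point demanding care there is the clean application of Observation~\ref{smallarc} to arcs having an endpoint \emph{on} the separating great circle, so that their interiors stay strictly inside the disjoint open hemispheres $S^{+}$ and $S^{-}$.
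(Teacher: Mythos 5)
Your proof is correct and takes essentially the same route as the paper: both arguments reduce the bound to showing that each of the nine quadrilaterals of $K_{3,3}$ contains at most one crossing, proved by the same hemisphere-separation argument via Observation~\ref{smallarc} (if one pair of opposite edges crosses, the endpoints of the crossed edge lie on opposite sides of the relevant great circle, forcing the other opposite pair into different closed hemispheres). Your write-up merely makes explicit two points the paper leaves implicit --- that adjacent short geodesic edges cannot cross, and that the eighteen disjoint edge pairs are exactly the opposite-edge pairs of the nine quadrilaterals --- which is careful bookkeeping, not a different method.
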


\begin{proof}
There are exactly nine quadrilaterals in \ke. We show that in each quadrilateral at most two edges cross each other.
Consider the quadrilateral $1234$ in a geodesically immersed \ke.
Suppose edge $12$ crosses edge $34$.
Then vertices $3$ and $4$ are on different sides of $\C_{12}$. 
See Figure \ref{separation}.
By Observation \ref{smallarc}, edges $14$ and $32$ lie in different hemispheres determined by $\C_{12}$,
and hence they do not cross each other.
\end{proof}

% picture
\begin{figure}[htpb!]
\begin{center}
\begin{picture}(160, 136)
\put(0,0){\includegraphics{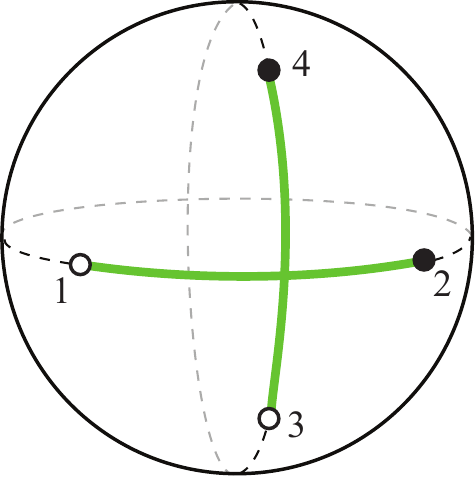}}
\put(15,100){$S_1$}
\put(110,100){$S_2$}
\put(108,30){$S_3$}
\put(16,30){$S_4$}
\end{picture}
\caption{Two intersecting arcs determine four components of the sphere.}
\label{separation}
\end{center}
\end{figure}

% --------existence of two non-intersecting edges------------------

\begin{lemma}
Every geodesically immersed \ke in $S^2$ has at least two edges with zero crossings each.
\label{2edges}
\end{lemma}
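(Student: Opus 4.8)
The plan is to locate crossing-free edges by exploiting the separation principle behind Observation~\ref{smallarc}. The key sufficient condition is this: if all four remaining vertices of the immersed \ke lie strictly on one side of the great circle $\mathcal{C}_{ab}$ determined by an edge $ab$, then by Observation~\ref{smallarc} every other edge $cd$ stays inside a single open hemisphere bounded by $\mathcal{C}_{ab}$, and so cannot meet the arc $ab \subset \mathcal{C}_{ab}$; hence $ab$ has no crossings. So it suffices to produce two inter-part pairs $\{a,b\}$ whose great circle has all other vertices on one side, or, failing that, to establish crossing-freeness directly.

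First I would pass to the convex position of the six vertices. Since they lie on $S^2$, they are the vertices of their spherical convex hull (when they lie in an open hemisphere), and its boundary is a geodesic polygon; each boundary arc lies on a great circle having all remaining vertices on one side, so every boundary arc is crossing-free by the previous paragraph. If both parts $\{1,3,5\}$ and $\{2,4,6\}$ contribute a vertex to this boundary, then the cyclic sequence of parts read around the boundary changes color an even, positive number of times, so at least two boundary arcs join vertices of different parts; these are the two crossing-free edges we seek. This disposes of every configuration in which both parts meet the hull boundary.

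The hard part will be the remaining configuration, in which one part, say $\{2,4,6\}$, forms the hull while the other part $\{1,3,5\}$ lies entirely inside the geodesic triangle it spans (together with the related situation in which the six directions do not fit in a single hemisphere, so that no supporting great circle exists at all). Here the simple one-side test fails and the crossing-free edges are interior ``spokes'' rather than boundary arcs, as one already sees in the symmetric example where the crossing-free edges are $12$, $34$, and $56$. To handle this case I would invoke the finer crossing criterion---two short geodesic arcs cross exactly when each separates the endpoints of the other \emph{and} the common point of their great circles lies on both minor arcs---and an angular sweep from the outer vertices to select, for two of them, the extreme spoke to the inner part and show it meets no other edge. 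Verifying that mutual separation of endpoints does not by itself force a crossing (the intersection point of the two great circles can fall on the complementary arcs) is the subtle point, and together with the bound of Lemma~\ref{atmost9} it is what rules out having fewer than two crossing-free edges.
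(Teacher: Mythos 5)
Your first case is sound, but the proof is incomplete: all of the difficulty in this lemma is concentrated in the case you defer, and for that case you offer only a plan, not an argument. When one part, say $\{2,4,6\}$, spans the hull boundary and $\{1,3,5\}$ lies inside it (and likewise when the six points lie in no open hemisphere, so that no supporting great circle exists), the crossing-free edges are interior spokes such as $12$, $34$, $56$, and your sufficient criterion --- all four other vertices strictly on one side of $\C_{ab}$ --- is simply unavailable: no edge of the immersed $K_{3,3}$ need have that property in such a configuration. The ``finer crossing criterion'' and ``angular sweep'' you invoke are never carried out; you never exhibit, for a general configuration of this type, two specific edges together with a verification that each meets no other edge. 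Since this is exactly the configuration realized by the standard one-crossing immersion (Figure~\ref{fig-onecrossing}), the proposal as written establishes the lemma only for a proper subclass of immersions, and the excluded subclass is the one that matters.

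For comparison, the paper's proof sidesteps convex hulls entirely: assuming some crossing exists (otherwise the lemma is trivial), say $12$ crosses $34$, the two great circles $\C_{12}$ and $\C_{34}$ cut $S^2$ into four regions, and a case analysis on which regions contain vertices $5$ and $6$ produces, in every case, two crossing-free edges via repeated use of Observation~\ref{smallarc} (sometimes after introducing an auxiliary circle such as $\C_{14}$, or reducing one case to a previous one). That analysis is uniform --- it needs no hemisphere hypothesis and treats the spoke configurations on the same footing as the rest --- which is precisely what your hull-based decomposition lacks. If you want to salvage your approach, the missing piece is a complete argument for the one-part-inside case and the non-hemispherical case; in working it out you would likely find yourself reproducing something very close to the paper's region analysis.
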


\begin{proof}
Suppose, without loss of generality, that edges $12$ and $34$ cross.
Then $S^2 - (\C_{12} \cup \C_{34})$  consists of four components, $S_i$, $i=1,2,3,4$.
See Figure \ref{separation}.
Vertices~$5$ and $6$ cannot lie on $\C_{12}$  or $\C_{34}$ since no four vertices of $K_{3,3,1}$ are coplanar, and hence no three vertices of the immersed \ke lie on the same great circle.
We split the proof into three cases, according to which components $S_i$ contain the vertices~$5$ and $6$. 

\medskip

{Case 1}. Vertices  $5$ and $6$ lie in the same component.
Then we have, up to symmetry, two subcases: $5,6\in S_1$, or $5,6\in S_2$. 
In both subcases, $32$ has no crossings.  
We show that at least one of the other eight edges has no crossings.
If  $5,6\in S_2$, then $14$ has no crossings.
So suppose $5,6 \in S_1$.
If $56\cap 14= \mt$, then $56$ has no crossings, as desired. 
If $56\cap 14\ne \mt$, then vertices~$5$ and $6$ are on different sides of $\C_{14}$. 
Let $S^+$ and $S^-$ denote the two hemispheres determined by $\C_{14}$.
Since 12 and 34 cross,  
vertices $3$ and $2$ lie in the same hemisphere, say $S^+$.
It follows that if $6\in S^-$, then $16$ has no crossings;
and if $5\in S^-$, then $54$ has no crossings.

\medskip

{Case 2}. Vertices~$5$ and $6$ lie in adjacent components.
Then we have, up to symmetry,  two subcases: 
$6\in S_1$ and $5\in S_2$; or $6\in S_1$ and $5\in S_4$. 
In both cases, $32$ has no crossings.  
We show that at least one of the other eight edges has no crossings.
If $6\in S_1$ and $5\in S_2$, then $16$ has no crossings. 
So suppose $6\in S_1$ and $5\in S_4$.
Let $S^+$ and $S^-$ denote the two hemispheres determined by  $\C_{14}$, with $3,2 \in S^+$. 
If $5,6\in S^-$, then $56$ has no crossings.
If $5,6\in S^+$, then $14$ has no crossings.
If $5\in S^+ $ and $6\in S^-$, then $16$ has no crossings.
If $5\in S^-$ and $6\in S^+$, then $54$ has no crossings.

\medskip

{Case 3}. Vertices~$5$ and $6$ lie in opposite components.
Then we have, up to symmetry,  three subcases: 
(3a)~$6\in S_1$ and $5\in S_3$; or  
(3b)~$6\in S_4$ and $5\in S_2$; or 
(3c)~$6\in S_2$, $5\in S_4$. 
In (3a), edges $16$ and $52$ have no crossings.
In (3b), no edge other than $12$ and $34$ has any crossigns.
In (3c), 
if $56 \cap 14 =\mt$ and  $56 \cap 32 = \mt$,  then $14$ and $32$ each have zero crossings.
So, by symmetry, we can assume
 $56 \cap 14 \ne \mt$. 
Then, since vertices 3 and 2 lie on the same side of $\C_{14}$, 
they lie in either the same component or adjacent components of 
$S^2 - (\C_{56} \cup \C_{14})$,
and we are back in Case~1 or 2, respectively.
\end{proof}

% -------- odd number of crossings ------------------

\begin{lemma}
Every geodesically immersed  $K_{3,3}$ in $S^2$ has an odd number of crossings.
\label{oddcrossing}
\end{lemma}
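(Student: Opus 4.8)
The plan is to show that the parity of the number of crossings in a geodesically immersed $K_{3,3}$ is an invariant that does not change as we move the six vertices around on $S^2$, and then to compute it for one convenient configuration. The key observation is that the number of crossings can only change when a vertex passes across an edge or when two vertices become antipodal/collinear, i.e., when the immersion degenerates. So first I would set up a continuous motion (a generic homotopy) of the six points on the sphere, moving them one at a time from an arbitrary starting position to a standard reference position, keeping all configurations generic except at finitely many instants.

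Next I would analyze what happens to the total crossing number at each degenerate instant. Since $K_{3,3}$ is bipartite with parts $\{1,3,5\}$ and $\{2,4,6\}$, every edge joins the two parts, and crossings only occur between disjoint edges. The crucial combinatorial input — available from Lemma~\ref{atmost9} and its proof — is that within each of the nine quadrilaterals at most one of the two ``diagonal'' pairs of edges crosses. As a single vertex, say vertex $1$, is dragged across a single edge $XY$ (with $X,Y$ in the opposite part), I would count how many crossings are created or destroyed among the edges incident to $1$. The point is that vertex $1$ is joined to all three vertices of the opposite part, and when $1$ crosses the geodesic $XY$, the crossing events come in pairs except for a controlled contribution, so the total crossing number changes by an even number. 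This reduces the lemma to checking parity on a single reference immersion.

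The main obstacle I expect is making the ``crossings change in pairs'' bookkeeping rigorous: when vertex $1$ passes over the edge $XY$, several edges incident to $1$ (namely $12$, $14$, $16$) may simultaneously gain or lose crossings with the various edges of $K_{3,3}$, and I must verify that the net change is always even regardless of the local configuration of the remaining four vertices. I would handle this by fixing the edge $XY$ being crossed, noting that $X,Y$ lie in the part opposite to $1$, and examining which edges can possibly cross $XY$ — these are exactly the edges joining the third vertex of $X,Y$'s part to the two vertices of $1$'s part other than $1$ — and then pairing up the crossing changes using Observation~\ref{smallarc} to control which side of $\C_{XY}$ each relevant vertex lies on.

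Finally, I would exhibit an explicit standard geodesically immersed $K_{3,3}$ whose crossings are easy to count directly (for instance, placing the two parts on two disjoint small caps so that the only crossings arise from the ``complete bipartite'' pattern in a planar-like drawing), verify that it has an odd number of crossings, and conclude by the parity invariance that every geodesically immersed $K_{3,3}$ has an odd number of crossings. As an alternative to the homotopy argument, I would keep in mind a purely combinatorial route via the mod-$2$ count of crossings in a projection together with the fact that $K_{3,3}$ is nonplanar, which forces at least one crossing and pins down the parity; but I expect the homotopy/parity argument to be the cleaner path given the spherical geodesic setup already developed in the preceding lemmas.
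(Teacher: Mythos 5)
Your overall strategy is the same as the paper's: show that the parity of the crossing number is invariant under a generic homotopy of the immersion (the paper uses a linear homotopy in $\R^3$ between linear embeddings projecting to the two immersions, which is equivalent to moving the six vertices on the sphere), check that each elementary event preserves parity, and compute the parity on one reference immersion (the paper uses one with exactly one crossing; your ``two caps'' configuration has nine, which also works). So the route is right; the problem is in your execution of the crucial bookkeeping step.

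That step, as you wrote it, rests on a false premise about $K_{3,3}$. You take the edge $XY$ being swept across by vertex $1$ to have both endpoints ``in the part opposite to $1$,'' and you identify the edges whose crossings with $XY$ can change as ``the edges joining the third vertex of $X,Y$'s part to the two vertices of $1$'s part other than $1$.'' But $K_{3,3}$ is bipartite with parts $\{1,3,5\}$ and $\{2,4,6\}$, so every edge has one endpoint in \emph{each} part: the edge $XY$ you describe does not exist, and in any case the edges relevant to the event are those incident to the moving vertex, not edges avoiding it. The correct count is simpler: if vertex $v$ passes across a non-incident edge $e$, the only crossings that can appear or disappear are those between $e$ and the three edges at $v$; exactly one of those three shares an endpoint with $e$ (and adjacent geodesic edges never cross), so exactly two of them toggle their crossing with $e$, and the total changes by $0$ or $\pm 2$. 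This is precisely the fact recorded in the paper's remark following the lemma: for every vertex $v$ and every non-incident edge $e$ of $K_{3,3}$, an even number of edges at $v$ are disjoint from $e$. Two further points: Lemma~\ref{atmost9} (at most one crossing per quadrilateral) is not the combinatorial input needed here and plays no role in the parity argument; and your fallback ``combinatorial route'' fails as stated, since nonplanarity of $K_{3,3}$ forces at least one crossing but by itself says nothing about parity --- the parity statement is exactly what must be proved. With the corrected vertex-across-edge count, together with the observation you already flagged (two geodesic arcs cross at most once and cannot be tangent unless they lie on a common great circle, so vertex-edge passages and harmless triple points are the only generic events), your argument goes through and coincides with the paper's proof.
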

\begin{proof}
There is a geodesic immersion $D_0$ of $K_{3,3}$ in $S^2$ with exactly one crossing. See Figure \ref{fig-onecrossing}. 
Let $D_1$ be an arbitrary geodesic immersion of  $K_{3,3}$ in $S^2$.
For $i = 0,1$, let $G_i$ be a linear embedding of $K_{3,3}$ with projection $D_i$.
Let $H: K_{3,3} \times I \to \R^3$ be a linear homotopy (i.e., every point moves in a straight line), taking $G_0$ to $G_1$.
By slightly perturbing $H$,
if necessary, we can assume that for every $t$,
each singularity of the projection of $H(K_{3,3},t)$ onto the sphere is either
(i) a double point at least one whose preimages  is an interior point of an edge (e.g., arising in moves as in Figures~\ref{fig-moves}(a) and \ref{fig-moves}(b)),
 or
 (ii) a triple point all of whose preimages are interiors points of disjoint edges (arising in a Reidemeister~III move as in Figure~\ref{fig-moves}(c)).

Since with each move the parity of the total number of crossings does not change, the desired conclusion follows. 

% picture
\begin{figure}[htpb!]
\begin{center}
\begin{picture}(135, 145)
\put(0,0){\includegraphics{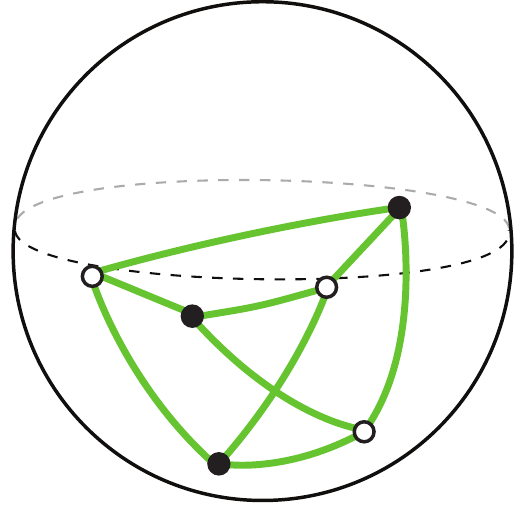}}
\end{picture}
\caption{Geodesically immersed $K_{3,3}$ with one crossing
}
\label{fig-onecrossing}
\end{center}
\end{figure}

% picture
\begin{figure}[htpb!]
\begin{center}
\begin{picture}(400, 175)
\put(0,0){\includegraphics{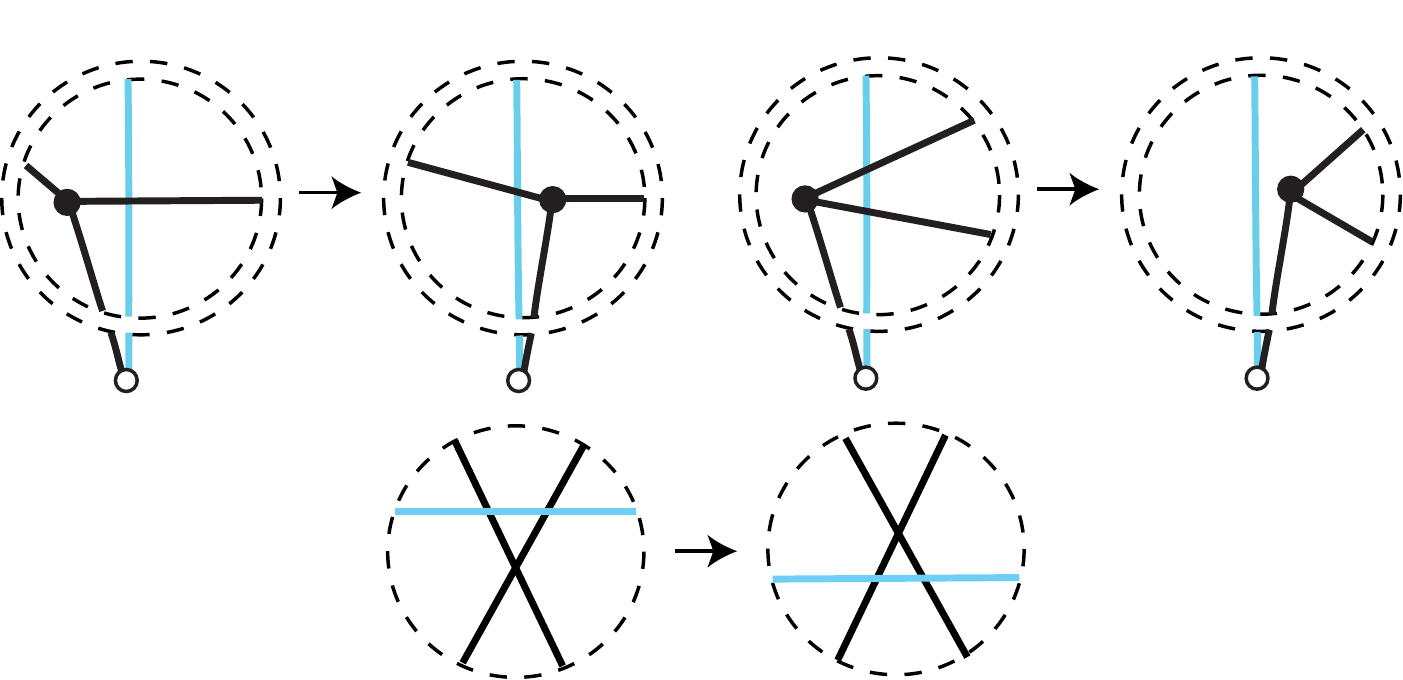}}
\put(90, 80){(a)}
\put(305,80){(b)}
\put(195,-4){(c)}
\end{picture}
\caption{Moves which do not change the parity of the number of crossings}
\label{fig-moves}
\end{center}

\end{figure}

\end{proof}

\begin{remark}
Using the same proof as above, we can in fact show the following:
Suppose in a graph $G$, 
for every every vertex $v$ and every edge $e$ not incident with $v$,
there are an even number of edges disjoint from $e$ and incident with $v$.
Then the number of crossings in a geodesic immersion of $G$ in a 2--sphere
has the same parity as in any other geodesic immersion of $G$ in a 2--sphere.
\end{remark}

%--------------non-realizability lemma---------------

\begin{lemma}[Non-realizability Lemma] 
Let $A, B, C, X, Y, Z$ be the vertices of a geodesically immersed graph in a sphere,
such that $AB//XY$, $XY//BC$, $BC//YZ$, and $YZ//AB$.
Then this immersion is not realizable.
\label{NRLemma}
\end{lemma}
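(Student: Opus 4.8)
The plan is to argue by contradiction. Assume the immersion is realizable, let $O$ be the center of the sphere, and let $\tilde{A},\tilde{B},\dots,\tilde{Z}\in\R^3$ be the points whose central projections from $O$ are $A,B,\dots,Z$. The first step is to reinterpret each over/under hypothesis as a statement about a straight segment piercing a flat triangle with apex $O$. I claim that $VW//XY$ holds precisely when the segment $\tilde{V}\tilde{W}$ meets the interior of the planar triangle $O\tilde{X}\tilde{Y}$: at the crossing, both $\tilde{V}\tilde{W}$ and $\tilde{X}\tilde{Y}$ meet the common ray from $O$ through the crossing point, and ``$VW$ over'' says the preimage on $\tilde{V}\tilde{W}$ is the nearer one, hence lies between $O$ and the chord $\tilde{X}\tilde{Y}$ and inside the cone that chord spans, i.e.\ inside $O\tilde{X}\tilde{Y}$. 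Applying this to the four hypotheses gives four piercing facts: $\tilde{A}\tilde{B}$ meets $O\tilde{X}\tilde{Y}$, $\tilde{X}\tilde{Y}$ meets $O\tilde{B}\tilde{C}$, $\tilde{B}\tilde{C}$ meets $O\tilde{Y}\tilde{Z}$, and $\tilde{Y}\tilde{Z}$ meets $O\tilde{A}\tilde{B}$. (This is exactly the form in which the hypothesis feeds into the linking arguments of Lemma~\ref{0linking} when $O$ is a graph vertex.)

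The second step linearizes these comparisons. For each edge $e$ with endpoints $\tilde{P},\tilde{Q}$, let $F_e=\langle\,\cdot\,,n_e\rangle$, where $n_e$ is the vector in the plane $O\tilde{P}\tilde{Q}$ determined by $\langle\tilde{P},n_e\rangle=\langle\tilde{Q},n_e\rangle=1$. The point of this normalization is that for any direction $u$ in the arc of $e$, $F_e(u)$ equals the \emph{reciprocal} of the radial distance from $O$ to $\tilde{e}$ along $u$; thus reciprocal radial distance is the restriction of a linear functional. Since the over-strand is the nearer one, $VW//XY$ becomes the strict inequality $F_{VW}(u)>F_{XY}(u)$ at the crossing direction $u$. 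Writing $F,G,H,K$ for the forms of $AB,XY,BC,YZ$ and $u_1,\dots,u_4$ for the four crossing directions, the hypotheses read
\[
(F-G)(u_1)>0,\quad (G-H)(u_2)>0,\quad (H-K)(u_3)>0,\quad (K-F)(u_4)>0.
\]
Note the four functionals $F-G,\,G-H,\,H-K,\,K-F$ sum to the zero functional, so if the four $u_i$ coincided we would get $0>0$ at once; the difficulty is that they do not coincide.

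The third step is to close this gap using the incidence structure that distinguishes our configuration from an arbitrary one. Here the two shared vertices are decisive: since $\tilde{B}$ is an endpoint of both $AB$ and $BC$ we have $F(\tilde{B})=H(\tilde{B})=1$, so $(F-H)(\tilde{B})=0$; likewise $(G-K)(\tilde{Y})=0$. In addition, because each of $AB,BC$ crosses each of $XY,YZ$, the crossing arcs force the side pattern ``$\tilde{B}$ separated from $\tilde{A},\tilde{C}$'' and ``$\tilde{Y}$ separated from $\tilde{X},\tilde{Z}$'' (via Observation~\ref{smallarc}). Combining the four cyclic inequalities with these two pivot equalities and the side conditions yields the contradiction; equivalently, one shows that the partial chirotope these signs impose on $\{O,\tilde{A},\tilde{B},\tilde{C},\tilde{X},\tilde{Y},\tilde{Z}\}$ is not realizable, by producing a Grassmann--Pl\"ucker relation among the tetrahedral orientations that share the pivot directions $O\tilde{B}$ and $O\tilde{Y}$ and checking that the four inequalities make it impossible.

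The main obstacle is precisely this last step, and it is where all the hypotheses must be used at once: any single over/under relation, and indeed any three of the four, is realizable by straight segments (three rods can be cyclically ``over'' one another), so no local argument suffices. The contradiction can only come from the full pattern---both shared vertices together with all four cross-pairs actually crossing. The reformulations in the first two steps are designed to make the required sign cancellation transparent, so that the bookkeeping reduces to one determinant identity anchored at $\tilde{B}$ and $\tilde{Y}$ rather than to a large case split.
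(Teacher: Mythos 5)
Your Steps 1 and 2 are correct, and Step 1 (reading $VW//XY$ as ``the segment $\tilde{V}\tilde{W}$ meets the flat triangle $O\tilde{X}\tilde{Y}$'') is exactly the translation the paper's proof also begins with; note only that the ``precisely when'' quietly uses the fact that two geodesic arcs, each shorter than half a great circle, cross at most once. The genuine gap is that the lemma is never actually proved: all of its content is concentrated in your Step 3, and there you merely assert the conclusion. ``Combining the four cyclic inequalities with these two pivot equalities and the side conditions yields the contradiction'' is a claim, not an argument; the promised Grassmann--Pl\"ucker relation is never written down, let alone checked against the four signs, and your closing paragraph concedes that this step is ``the main obstacle.'' The gap is not cosmetic. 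The four inequalities are evaluated at four distinct crossing directions $u_1,\dots,u_4$, while the pivot identities constrain $F-H$ and $G-K$ only along the two lines $O\tilde{B}$ and $O\tilde{Y}$; nothing in your setup relates these six directions to one another, and that relation is precisely where the geometry must enter (for instance, the order in which the two crossings occur along each edge matters, and your functional framework does not see it). As written, this is a plan for a proof, not a proof.

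For contrast, the paper finishes in a few lines after your Step 1 by using an object your scheme never introduces: the plane through the three points $\tilde{X},\tilde{Y},\tilde{Z}$ of one path. Taking that plane to be $z=0$ with the sphere above it, $AB//XY$ gives a point of $\tilde{A}\tilde{B}$ between $O$ and $\tilde{X}\tilde{Y}$, hence with $z>0$, while $YZ//AB$ gives a point of $\tilde{A}\tilde{B}$ beyond $\tilde{Y}\tilde{Z}$, hence with $z<0$; so $\tilde{A}\tilde{B}$ crosses the plane (indeed the disk $\disk\tilde{X}\tilde{Y}\tilde{Z}$), and the paper concludes that $\tilde{B}$ lies on the positive side. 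Running the same computation on $\tilde{B}\tilde{C}$, whose over/under data are reversed, places $\tilde{B}$ on the negative side, a contradiction. The crucial move is that a single plane sees both crossings on each edge at once, so the two inequalities along one edge combine directly instead of at incomparable directions $u_i$. If you want to complete your write-up, the efficient route is to drop the chirotope language and prove this plane-crossing statement; Observation~\ref{smallarc} (hemisphere convexity, hence convexity of the spherical triangle $XYZ$) is the tool that locates the crossing point inside the disk and lets you track which endpoint of each edge lands on which side.
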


\begin{proof}
Suppose toward contradiction that this immersion, $G$, is realizable.
Then $G$ is the projection of
a linearly embedded  graph $\tilde{G}$ with vertices $\tilde{A}, \tilde{B}, \tilde{C}, \tilde{X}, \tilde{Y}, \tilde{Z}$
onto a sphere
such that for each $V \in \{A, B, C, X, Y, Z\}$, 
the projection of $\tilde{V}$ onto the sphere is $V$.

Without loss of generality we may assume that 
the plane determined by the triangle  $\tilde{X}\tilde{Y}\tilde{Z}$ is the plane $z=0$,
and that the sphere is above the $z=0$ plane. 
See Figure \ref{fig-non-realizable}.
Since $AB//XY$ and $YZ//AB$, the edge $\tilde{A}\tilde{B}$ intersects the disk $\disk \tilde{X}\tilde{Y}\tilde{Z}$;
hence the $z$-coordinate of $\tilde{B}$ is positive.
Since $BC//YZ$ and $XY//BC$, the edge $\tilde{B}\tilde{C}$ intersects $\disk \tilde{X}\tilde{Y}\tilde{Z}$, 
and the $z$-coordinate of $\tilde{B}$ is negative, which is a contradiction.
\end{proof}

We will refer to the configuration described in the above lemma as \textit{non-realizable via $ABC$ and $XYZ$}.
Note that to satisfy the hypotheses of the lemma, 
it is sufficient for the crossings to alternate along three of the four edges $AB$, $BC$, $XY$, and $YZ$.

% picture
\begin{figure}[htpb!]
\begin{center}
\begin{picture}(160, 120)
\put(0,0){\includegraphics{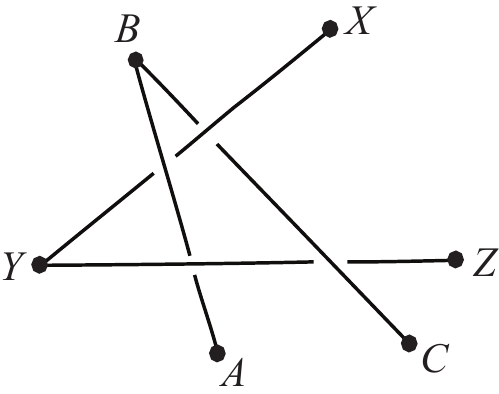}}
\end{picture}
\caption{Non-realizable configuration}
\label{fig-non-realizable}
\end{center}
\end{figure}

\medskip

\begin{lemma}
Let $A, B, C, X, Y, Z$ be the vertices of a geodesically immersed \ke 
that is the projection of the \ke subgraph of a linearly embedded $\ko \subset \R^3$
with vertices  $\tilde{A}$,  $\tilde{B}$, $\tilde{C}$, $\tilde{X}$, $\tilde{Y}$, $\tilde{Z}$, $O$,
onto a sphere centered at $O$,
where $O$ is the vertex of degree six.
Assume each of $AB$ and $BC$ crosses both $XY$ and $YZ$.
Then it is not possible that
three or more of edges  $AB$, $BC$,  $XY$ and $YZ$  will each satisfy one of the following conditions:
\begin{enumerate}
\item[(i)]
the edge has  no other crossings in $K_{3,3}$, 
and the triangle it determines with vertex~$O$ 
links the quadrilateral complementary in $\ko$ to the triangle; or
\item[(ii)]
the triangle determined by the edge and vertex~$O$ has linking number~2 with its complementary quadrilateral.
\end{enumerate}

 \label{CorNR1}
\end{lemma}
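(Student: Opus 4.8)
The plan is to reduce the statement to the Non-realizability Lemma (Lemma~\ref{NRLemma}) and its accompanying remark. Concretely, I would show that \emph{every} edge among $AB,BC,XY,YZ$ that satisfies (i) or (ii) must have its two crossings (with the other two of these edges) be of opposite type --- an over-strand at one and an under-strand at the other --- i.e.\ the crossings \emph{alternate} along that edge. Once three of the four edges alternate, the remark following Lemma~\ref{NRLemma} declares the immersion non-realizable, contradicting the hypothesis that our immersed \ke is the projection of a linearly embedded $\ko$. Hence at most two edges can satisfy (i) or (ii).

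For the setup, note that since $AB,BC$ are edges, $A,C$ lie in one part and $B$ in the other, and likewise $X,Z$ in one part and $Y$ in the other, so the bipartition is $\{A,C,Y\}\cup\{B,X,Z\}$. The complementary quadrilaterals of the four $O$-triangles are then $OAB$ with $CXYZ$, $OBC$ with $AXYZ$, $OXY$ with $ABCZ$, and $OYZ$ with $ABCX$. Because $O$ is the center of the sphere, the radial edges $OA,OB$ project to the single points $A,B$; thus the only crossings of $OAB$ in the projection are crossings of the arc $AB$ with the quadrilateral arcs, and projecting from $O$ one checks that a quadrilateral edge pierces the planar disk $\disk \tilde{O}\tilde{A}\tilde{B}$ exactly at a crossing where the quadrilateral arc is the over-strand (its preimage nearer $O$). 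Consequently $\mathrm{lk}(OAB,CXYZ)$ is the signed count of such piercings; in particular $\mathrm{lk}\equiv k \pmod 2$, where $k$ is the number of quadrilateral edges piercing the disk, the count used in Lemma~\ref{0linking}.

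The crucial combinatorial observation is that for each of the four edges, the two complementary-quadrilateral edges it is assumed to cross \emph{share a vertex} ($XY,YZ$ share $Y$; $AB,BC$ share $B$) and hence are \emph{adjacent} in that quadrilateral. Since the quadrilateral is a closed curve, its successive transverse crossings of the plane of the $O$-triangle alternate in sign, and two crossings on adjacent quadrilateral edges are consecutive along the loop, so they carry opposite signs whenever both pierce the disk. For condition (i), where the arc meets its quadrilateral only at these two adjacent edges: if both pierced, the two piercings would cancel and the link would be trivial (this is precisely the ``$k=2$, adjacent edges'' case of Lemma~\ref{0linking}), giving $\mathrm{lk}=0$; if neither pierced, again $\mathrm{lk}=0$. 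As $\mathrm{lk}\neq 0$, exactly one pierces, so the edge alternates. For condition (ii), $\mathrm{lk}=2$ forces exactly two piercings of the same sign (alternation of signs makes $0,4$ piercings net $0$, makes $1,3$ odd, and makes two \emph{adjacent} piercings cancel); a same-signed pair must therefore be non-adjacent in the quadrilateral, hence uses exactly one of the two guaranteed (adjacent) edges. So again exactly one guaranteed crossing pierces the disk, and the edge alternates.

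Putting these together, any edge satisfying (i) or (ii) alternates, so three or more such edges give three alternating edges, and the remark after Lemma~\ref{NRLemma} yields non-realizability --- the desired contradiction. The step I expect to be the main obstacle is the sign bookkeeping for (ii): one must argue rigorously that the same-signed pair of piercings cannot be the two guaranteed (adjacent) edges. This rests on showing that adjacency in the quadrilateral forces consecutiveness (hence opposite signs) among the plane crossings, using that no quadrilateral vertex lies on the triangle's plane and that no further crossing lies between two adjacent quadrilateral edges along the loop. Establishing cleanly the equivalence ``disk-piercing $\Leftrightarrow$ over-strand quadrilateral arc'' and the parity $\mathrm{lk}\equiv k \pmod 2$ (via Lemma~\ref{0linking}) is the remaining piece that must be handled with care.
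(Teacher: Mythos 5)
Your proposal is correct and takes essentially the same route as the paper: show that any of the four edges satisfying (i) or (ii) must have alternating over/under crossings at its two guaranteed crossings (because non-alternating crossings on adjacent quadrilateral edges contribute cancelling signs to the linking number), and then invoke Lemma~\ref{NRLemma} together with the remark that three alternating edges suffice. The paper compresses the sign bookkeeping into one sentence (``their contributions to the linking number \dots cancel each other out''), whereas you spell out the disk-piercing/over-strand equivalence and the case analysis for linking number~2; this is a more detailed write-up of the same argument, not a different one.
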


\begin{proof}
Suppose an edge $VW$ of \ke crosses two edges on the sphere which are adjacent to each other.
If these two crossings do not alternate along $VW$, 
then their contributions to the linking number of $O \tilde{V} \tilde{W}$ with its complementary quadrilateral
cancel each other out.
Hence, if $VW$ satisfies either condition~(i) or (ii), then these two crossings must alternate.

Now, if  three of the edges  $AB$, $BC$,  $XY$ and $YZ$  each have alternating crossings,
then by  Lemma~\ref{NRLemma}
and the comment following its proof,
we get a contradition.
\end{proof}

%--------------nonrealizability lemma 2---------------

\begin{lemma} Let $A, B, P, X, Y, Z$ be the vertices of a geodesically immersed graph in a sphere such that
$YP//AB$, $AB//XY$, and $XY//BP$.  Then this immersion is non-realizable.

\label{CorNR2}
\end{lemma}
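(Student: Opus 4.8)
The plan is to mimic the proof of the Non-realizability Lemma (Lemma~\ref{NRLemma}), treating the shared vertex $P$ as a fourth ``crossing'' that has degenerated onto a common endpoint. Suppose toward contradiction that the immersion is realizable, so that it is the projection from the center $O$ of the sphere of a linearly embedded graph $\tilde{G}$ with vertices $\tilde{A}, \tilde{B}, \tilde{P}, \tilde{X}, \tilde{Y}, \tilde{Z}$ (the vertex $\tilde{Z}$ playing no role). As in Lemma~\ref{NRLemma}, I would choose coordinates so that the plane through $\tilde{X}, \tilde{Y}, \tilde{P}$ is $z=0$ and the sphere, hence $O$, lies in the half-space $z>0$. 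The basic observation is the height rule for crossings: since the over-strand is the one whose preimage is closer to $O$, and an in-plane strand has its preimage at height $z=0$, the preimage on the over-strand lies strictly between $O$ and the plane and so has $z>0$, while the preimage on the under-strand lies beyond the plane and has $z<0$.

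Applying this to the three crossings gives: from $AB//XY$, the edge $\tilde{A}\tilde{B}$ is above the plane at its crossing with $\tilde{X}\tilde{Y}$; from $YP//AB$, the edge $\tilde{A}\tilde{B}$ is below the plane at its crossing with $\tilde{Y}\tilde{P}$; and from $XY//BP$, the edge $\tilde{B}\tilde{P}$ is below the plane at its crossing with $\tilde{X}\tilde{Y}$. The first consequence is immediate and comes from $\tilde{B}\tilde{P}$: its endpoint $\tilde{P}$ lies in the plane ($z=0$) while an interior point of it lies below the plane, so since $z$ is affine along the segment, $z_{\tilde{B}}<0$.

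Next I would analyze $\tilde{A}\tilde{B}$. It is above the plane at one crossing and below at the other, so $z$ changes sign along it; as $z$ is affine and $z_{\tilde{B}}<0$, this forces $z_{\tilde{A}}>0$ and, more importantly, the ordering that, traveling from $\tilde{A}$ to $\tilde{B}$, the edge meets $\tilde{X}\tilde{Y}$ (at height $>0$) strictly \emph{before} it meets $\tilde{Y}\tilde{P}$ (at height $<0$). The contradiction is that the spherical picture forces the \emph{opposite} order. Since $BP$ crosses $XY$, the vertices $B$ and $X$ must lie on the same side of $\C_{YP}$: otherwise, by Observation~\ref{smallarc}, the edges $BP$ and $XY$ would be confined to opposite hemispheres of $\C_{YP}$, meeting that circle only at $P$ and $Y$ respectively, and so could not cross. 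Together with the facts that $A,B$ lie on opposite sides of each of $\C_{XY}$ and $\C_{YP}$ (because $AB$ crosses both $XY$ and $YP$) and that $B,P$ lie on opposite sides of $\C_{XY}$, this pins down the great-circle regions containing $A$ and $B$. Because the two crossings on $\tilde A\tilde B$ occur on the genuine edges $XY$ and $YP$ and not on the complementary arcs of the great circles, the arc $AB$ is forced to pass through the region adjacent to the triangle $XYP$, and hence to meet $YP$ before $XY$ --- contradicting the order dictated by the heights.

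The main obstacle is precisely this last step: rigorously establishing the order in which $\tilde{A}\tilde{B}$ meets the two edges, i.e.\ showing that crossing the \emph{actual} edges $XY$ and $YP$ (rather than the complementary great-circle arcs) forces the arc through the triangle-side region, and that this spherical order is incompatible with the order forced by the over/under data. Once the order conflict is in hand, the rest is the bookkeeping of Lemma~\ref{NRLemma} with the fourth crossing collapsed onto the shared vertex $P$.
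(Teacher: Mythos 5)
Your proof is correct in outline and completable, but it takes a genuinely different route from the paper's. The paper's proof is a two-line reduction to Lemma~\ref{NRLemma}: assuming realizability, it separates the edges $BP$ and $YP$ at the shared vertex $P$, extending them into new edges $BC$ and $YZ$ and raising $BC$ slightly toward the sphere's center, so that a fourth crossing $BC//YZ$ appears near where $P$ was while the three given crossings and their over/under data persist; the perturbed configuration then satisfies $AB//XY$, $XY//BC$, $BC//YZ$, $YZ//AB$ and is realizable, contradicting Lemma~\ref{NRLemma}. You instead re-run the height argument of Lemma~\ref{NRLemma} with the plane through $\tilde{X}, \tilde{Y}, \tilde{P}$ as $z=0$: the crossing $XY//BP$ gives $z_{\tilde{B}}<0$ outright (this is where the degenerate ``fourth crossing'' at $P$ enters), and the two crossings on $\tilde{A}\tilde{B}$ then force, since $z$ is affine hence monotone along that segment, that it meets $\tilde{X}\tilde{Y}$ (at $z>0$) before $\tilde{Y}\tilde{P}$ (at $z<0$) traveling from $\tilde{A}$ to $\tilde{B}$. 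The step you flag as the main obstacle --- that the spherical picture forces the opposite order --- is real but dischargeable exactly along the lines you sketch: $\C_{XY}$ and $\C_{YP}$ meet at $Y$ and its antipode and cut the sphere into four lunes; your side conditions put $B$ in the lune bounded by the half-circle of $\C_{XY}$ containing edge $XY$ and the half-circle of $\C_{YP}$ \emph{not} containing edge $YP$, and put $A$ in the opposite lune; since the arc $AB$ (of length less than half a great circle, with endpoints on opposite sides of each circle) meets each great circle exactly once, and only at points of the actual edges, it can exit $A$'s lune only through edge $YP$, so it meets $YP$ first --- the desired contradiction, because the radial projection preserves the order of points along $\tilde{A}\tilde{B}$. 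The trade-off: the paper's perturbation trick buys brevity and reuses Lemma~\ref{NRLemma} wholesale (at the cost of an unproved but mild claim that a small perturbation preserves realizability and crossing data), whereas your argument is self-contained, avoids the perturbation, and makes explicit the ordering geometry that the paper's own proof of Lemma~\ref{NRLemma} leaves largely implicit, at the cost of carrying out the extra lune analysis.
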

\begin{proof}
Suppose this configuration is realizable.
Then we can obtain a realizable configuration by separating the edges $BP$ and $YP$ at $P$ and extending them into new edges $BC$ and $YZ$ such that $BC//YZ$. We do this by raising  $BC$ above $YZ$ a sufficiently small amount so that we have $XY // BC$.
We get a contradiction, since this configuration is non-realizable by Lemma \ref{NRLemma}.

% picture
\begin{figure}[ht]
\begin{center}
%\begin{picture}(200, 150)
%\put(0,0)
{\includegraphics{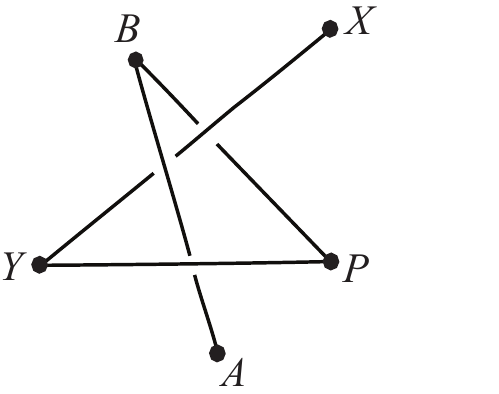}}
%\end{picture}
\caption{Non-realizable path $ABPYX$}
\label{97}
\end{center}
\end{figure}

\end{proof}

We will refer to the configuration described in the above lemma as \textit{non-realizable via path $ABPYX$}.

%-------------- R2 lemma  (project hemisphere to R^2) -----------------

\begin{lemma}
\label{lemmaR2}
Let $G$ be a geodesically immersed graph in a closed hemisphere such that no three of its vertices lie on a great circle.
Then there is a graph $G'$  immersed in $\R^2$ with straight edges, and isomorphic to $G$, such that
two edges of $G'$ cross each other if and only if so do the corresponding edges of $G$.
\end{lemma}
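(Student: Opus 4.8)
The plan is to use central (gnomonic) projection from the center $O$ of $S^2$ onto a plane tangent to the sphere. The one fact that makes everything work is that such a projection is a homeomorphism from an open hemisphere onto the tangent plane $\cong \R^2$, and that it carries every great-circle arc lying in that open hemisphere to a straight line segment. Granting this, I would define $G'$ to be the image of $G$: each vertex $V$ goes to its projection $V'$, and each geodesic edge goes to the segment joining the images of its endpoints. Since the projection is a bijection, two edges of $G$ meet at an interior point if and only if their images do, so $G'$ is a straight-edge immersion isomorphic to $G$ with exactly the same crossing pattern.

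The first step is therefore to arrange that all of $G$ lies in an \emph{open} hemisphere, since gnomonic projection is only defined (and only finite-valued) there. Writing the vertices as unit vectors $V_1,\dots,V_k$ from $O$, I claim it suffices to place all of the $V_i$ in a common open hemisphere $\{P : P\cdot N>0\}$. Indeed, along any great circle the function $P\cdot N$ is a sinusoid whose two zeros are antipodal, so each great circle meets the boundary $\{P\cdot N=0\}$ in two points that cut it into two open semicircles; if both endpoints of an edge lie in the $\{P\cdot N>0\}$ semicircle, then the shorter arc between them (of length less than half a great circle, by hypothesis) stays in that semicircle, hence in the open hemisphere. So I only need a vector $N$ with $V_i\cdot N>0$ for all $i$, which exists exactly when $O\notin\operatorname{conv}\{V_1,\dots,V_k\}$, by the separating hyperplane theorem.

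Next I would check that the hypotheses force $O\notin\operatorname{conv}\{V_i\}$ except in one degenerate case. All vertices lie in a closed half-space $\{z\ge 0\}$ through $O$, so if $O=\sum\lambda_i V_i$ is a convex combination, then $\sum\lambda_i (V_i)_z=0$ with every term nonnegative forces each contributing vertex to lie on the boundary great circle. Three vertices on that great circle are excluded by hypothesis, so $O$ could only be a convex combination of exactly two antipodal boundary vertices; but two antipodal vertices cannot be joined by an edge (the shorter geodesic is undefined, and its length would equal half a great circle). In this lone case I would perturb one of the two vertices slightly off the boundary. Because no two edges lie on a common great circle (that would put three or four vertices on a great circle), every crossing of $G$ is a transverse interior intersection, and transversality together with compactness shows that a sufficiently small perturbation of a single vertex neither destroys an existing crossing nor creates a new one. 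After this perturbation all vertices, and hence all of $G$, lie in an open hemisphere.

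Finally I would set up the projection concretely: rotate so that the open hemisphere is $\{z>0\}$ and let $\Pi(x,y,z)=(x/z,\,y/z)$ be projection from $O$ onto the plane $z=1$, with inverse $(u,v)\mapsto (u,v,1)/\sqrt{1+u^2+v^2}$; this exhibits $\Pi$ as a homeomorphism onto $\R^2$. A great circle is $S^2\cap\{ax+by+cz=0\}$, and its image satisfies $au+bv+c=0$, a line; an arc, being the continuous injective image of an interval, maps onto the segment between its endpoints' images. Thus $G'=\Pi(G)$ has straight edges, is isomorphic to $G$, and---because $\Pi$ is injective and takes interiors of arcs to interiors of segments---has a crossing between two edges exactly when $G$ does. (Two image segments could a priori overlap rather than cross, but that would force the two source edges onto a common great circle, which is excluded.) The main obstacle is not the projection itself, which is routine, but the reduction to an open hemisphere: one must notice that a closed hemisphere genuinely fails to embed in an open one precisely in the antipodal-boundary case, and then dispatch that case by a crossing-preserving perturbation justified by transversality.
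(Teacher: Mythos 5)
Your proof is correct, and its engine is the same as the paper's: central (gnomonic) projection from the sphere's center $O$ onto a plane, which preserves crossings because each edge lies in a plane through $O$, and two edges cross exactly when their planes meet along a line through $O$ that hits both arcs. (The paper projects onto a flat disk bounded by a circle parallel and close to the hemisphere's boundary rather than onto a tangent plane; this difference is immaterial.) Where you genuinely diverge is in the preliminary reduction to an \emph{open} hemisphere. The paper disposes of this in one sentence: at most two vertices lie on the boundary great circle, so ``slightly rotating'' that circle puts $G$ strictly inside. Your route --- all vertices lie in some open hemisphere if and only if $O$ is outside their convex hull (separating hyperplane), edges then follow their endpoints inside by the sinusoid argument, and the only obstruction is a pair of antipodal boundary vertices, which you remove by a crossing-preserving perturbation --- is longer but strictly more careful, and it in fact repairs a gap in the paper's one-liner: if the two boundary vertices are antipodal, then no rotation, indeed no choice of hemisphere whatsoever, places both strictly inside, since antipodal points lie on opposite sides of every great circle not containing them. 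So your perturbation step is not optional pedantry; it is exactly what that degenerate case requires. (The case is allowed by the lemma's hypotheses as stated, though it can be avoided in the paper's actual application inside Lemma~\ref{lemmaM2}, where the points arise from vertices in general position --- presumably why the authors ignored it.) One detail to add to your write-up: choose the perturbation of the antipodal vertex generically so that the hypothesis that no three vertices lie on a great circle is preserved, since you invoke it afterwards both for transversality of crossings and to rule out overlapping image segments; this costs nothing, as the excluded positions form finitely many measure-zero circles.
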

\begin{proof}
Since at most two vertices lie on the boundary great circle of the given hemisphere,
by slightly rotating the boundary great circle we can assume that $G$ lies in an open hemisphere $N$. 
Since $\partial N$ is disjoint from $G$,
there is a circle $C \subset N$ (not a great circle), parallel and close to $\partial N$, that is also disjoint from G.
Let $D$ be the flat disk bounded by $C$.
For each vertex $v$ of $G$, the line from $v$ to the sphere's center, $O$, intersects $D$ in a point $v'$.
This gives a projection  $G'$ of $G$ onto $D$.
We see as follows that $G'$ has the same crossings as $G$.
An edge of $G$ with vertices $v$ and $w$ is a subset of the intersection of $N$ with a plane $P_{vw}$ through $O$. 
The intersection of $P_{vw}$ with $D$ contains the edge $v' w'$ of $G'$. 
Now, two edges $vw$ and $xy$ of $G$ cross at a point $vw \cap xy = z$ if and only if
the planes $P_{vw}$ and $P_{xy}$ intersect in the line through $O$ and $z$.
And the latter holds if and only if  $v'w' \cap x'y' = z'$. 
\end{proof}

\begin{lemma}
Let $A, B, X, Y, Z$ be five points in $S^2$, 
no three of which lie on a great circle.
If $AX$ crosses $BY$, and $AZ$ crosses $BX$, then $AZ$ crosses $BY$. 
\label{lemmaM2}
\end{lemma}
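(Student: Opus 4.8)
The plan is to reduce the statement to a question about straight line segments in the plane and then settle it by a short ordering argument. First I would observe that $A$ and $B$ cannot be antipodal: if they were, every great circle through $A$ would also pass through $B$ (both lie on the line through the centre $O$), so $A$, $B$, and any third vertex would lie on a common great circle, contradicting the hypothesis. Hence the great circle $\C_{AB}$ is well defined. Next I would show that the crossing hypotheses force $X$, $Y$, $Z$ to lie in a single open hemisphere bounded by $\C_{AB}$. Since $A,B\in\C_{AB}$ and edges are geodesic arcs shorter than a semicircle, the closed-hemisphere analogue of Observation~\ref{smallarc} shows that $AX$ stays in the closed hemisphere on $X$'s side and meets $\C_{AB}$ only at $A$, and likewise $BY$ meets $\C_{AB}$ only at $B$. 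If $X$ and $Y$ were on opposite sides of $\C_{AB}$, then $AX$ and $BY$ would lie in the two closed hemispheres whose common boundary is $\C_{AB}$, so they could meet only on $\C_{AB}$, i.e. only if $A=B$; this contradicts their crossing at an interior point. Thus $X$ and $Y$ lie on the same side, and the crossing of $AZ$ and $BX$ similarly puts $X$ and $Z$ on the same side. Hence $X$, $Y$, $Z$ are all on one side, and all five points lie in a closed hemisphere.

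With all five vertices in a closed hemisphere and no three on a great circle, I would apply Lemma~\ref{lemmaR2} to the subgraph with edges $AX$, $AZ$, $BX$, $BY$ to obtain an isomorphic straight-edge immersion in $\R^2$, with vertices $A',B',X',Y',Z'$, having exactly the same crossings. It therefore suffices to prove the planar statement: if $A'X'$ crosses $B'Y'$ and $A'Z'$ crosses $B'X'$, then $A'Z'$ crosses $B'Y'$, where (transferring the previous paragraph, or re-deriving it directly in the plane from the two crossings) $X'$, $Y'$, $Z'$ all lie on one side of the line $A'B'$.

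For the planar statement I would attach to each point $W$ on the chosen side of line $A'B'$ its two base angles $\alpha(W)=\angle W A'B'$ and $\beta(W)=\angle W B'A'$, both lying in $(0,\pi)$. The key step is the crossing criterion: for two such points $P$ and $Q$, the segment $A'P$ crosses the segment $B'Q$ if and only if $\alpha(P)<\alpha(Q)$ and $\beta(P)>\beta(Q)$. This follows from the standard characterization of segment crossing, namely that each of the two segments must separate the endpoints of the other: $B'$ and $Q$ lie on opposite sides of the line $A'P$ exactly when $\alpha(Q)>\alpha(P)$, and $A'$ and $P$ lie on opposite sides of the line $B'Q$ exactly when $\beta(P)>\beta(Q)$. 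Granting the criterion, the hypotheses read $\alpha(X')<\alpha(Y')$ with $\beta(X')>\beta(Y')$, and $\alpha(Z')<\alpha(X')$ with $\beta(Z')>\beta(X')$. Chaining these gives $\alpha(Z')<\alpha(X')<\alpha(Y')$ and $\beta(Z')>\beta(X')>\beta(Y')$, so $\alpha(Z')<\alpha(Y')$ and $\beta(Z')>\beta(Y')$, which by the criterion means $A'Z'$ crosses $B'Y'$. Pulling this back through Lemma~\ref{lemmaR2} gives that $AZ$ crosses $BY$.

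The transitivity at the end is immediate; the work lies in the two reductions. I expect the hemisphere step to be the most delicate: it rests on the closed-hemisphere version of Observation~\ref{smallarc} (a short geodesic leaving $\C_{AB}$ into one open hemisphere never returns to $\C_{AB}$) together with the non-antipodality of $A$ and $B$, and only after this is Lemma~\ref{lemmaR2} available. The crossing criterion itself is elementary but must be stated carefully, since both inequalities $\alpha(P)<\alpha(Q)$ and $\beta(P)>\beta(Q)$ are genuinely needed for the equivalence.
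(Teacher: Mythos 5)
Your proof is correct, but it takes a genuinely different route from the paper's in both of its stages. To get all five points into a hemisphere, the paper argues that $B$ and $Z$ lie on the same side of $\C_{AX}$ and then replaces $Y$ by an auxiliary point $y \in BY$ close to the crossing $AX \cap BY$, so that $A, B, X, y, Z$ fit in a hemisphere; you instead show directly that the two crossing hypotheses force $X, Y, Z$ onto one side of $\C_{AB}$, which lets you apply Lemma~\ref{lemmaR2} to the actual five points with no perturbation, and lets you conclude about $BY$ itself rather than via the surrogate edge $By$. More significantly, after passing to the plane the paper invokes oriented matroid theory: the crossings give the circuits $(AX, By)$ and $(AZ, BX)$, and the weak elimination axiom C3 (eliminating $X$) produces the circuit $(AZ, By)$, i.e.\ the desired crossing. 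You replace this with an elementary criterion --- $A'P$ crosses $B'Q$ if and only if $\alpha(P)<\alpha(Q)$ and $\beta(P)>\beta(Q)$ --- which turns the lemma into transitivity of two orderings; your criterion and the chaining are both sound. The paper's choice is natural in context, since Axiom C3 is also the workhorse of Lemmas~\ref{4intersections} and~\ref{2linking}, so the circuit language is being set up anyway; your argument is self-contained, avoids matroid axioms entirely, and makes the ``transitivity'' nature of the statement transparent. The two points needing care in your write-up, which you correctly flag, are the closed-hemisphere strengthening of Observation~\ref{smallarc} (an edge from a point of $\C_{AB}$ to a point off $\C_{AB}$ meets $\C_{AB}$ only at that endpoint, since the short arc omits the antipode) and the fact that Lemma~\ref{lemmaR2} tolerates the two boundary vertices $A$ and $B$ --- which it does, as its proof explicitly allows up to two vertices on the boundary great circle.
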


\begin{proof}
Since $AX$ crosses $BY$ and $AZ$ crosses $BX$, $B$ and $Z$ must lie on the same side of $\C_{AX}$. 
Let $y\in BY$ be a point on the same side  of $\C_{AX}$ as $Y$, 
sufficiently close to  $AX \cap BY$ so that
the five points $A$, $B$, $X$, $y$, and  $Z$ are contained in a hemisphere. 
Then, by Lemma \ref{lemmaR2} (and by abuse of notation), 
we can assume these five points lie on $\R^2$
 and determine an oriented matroid of rank 3 on five elements.
Since  $AX$ crosses $By$ and $AZ$ crosses $BX$, we have the circuits $(AX, By)$ and $(AZ, BX)$,
where $(\alpha \beta, \gamma \delta)$ denotes the circuit $C=C^+\cup C^-$ with $C^+=\{\alpha, \beta\}$ and $C^-=\{ \gamma, \delta\}$.

Recall Axiom~C3 (weak elimination) of the definition of oriented matroids via circuits (\cite{BLSWZB}, p. 103): 
\begin{quote}
For all  $C_1, C_2\in \mathcal{C}$ with $C_1\ne -C_2$, if $e\in C_1^+\cap C_2^- $, then there exists $C_3\in \mathcal{C}$ such that $C_3^+\subseteq (C_1^+\cup C_2^+) \setminus \{e\}$ and $C_3^-\subseteq (C_1^-\cup C_2^-) \setminus \{e\}$.
\end{quote}

Applying Axiom~C3 to the circuits $(AX, By)$ and $(AZ, BX)$ with $e=X$ gives the circuit $(AZ, By)$, 
which implies $AZ$ crosses $By$, and hence also $BY$.
\end{proof}

\begin{lemma}
\label{4intersections}
Let $X$, $Y$, $A$, $B$, $C$, $D$, $a_1$, $a_2$ be points in $S^2$ such that 
$XY$ crosses $AC$ at $a_1$, and $AD$ at $a_2$,
and $a_1$ is between $X$ and $a_2$.
Suppose also that $XY$ crosses $BD$, and $BX$ crosses $AD$.
Then $BX$ crosses $AC$. 
\end{lemma}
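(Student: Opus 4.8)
The plan is to show that the great circle $\C_{XY}$ separates $\{A,B\}$ from $\{C,D\}$, then to push the whole configuration into the closed hemisphere on the $A$--side of $\C_{XY}$, where the desired crossing becomes an elementary statement about two cevians of a triangle. First I would record what each crossing hypothesis says about sides of $\C_{XY}$: since a short geodesic arc meets a great circle at most once, each of the crossings of $XY$ with $AC$, with $AD$, and with $BD$ forces the endpoints of the second arc onto opposite sides of $\C_{XY}$. Thus $A,C$ lie on opposite sides, $A,D$ lie on opposite sides, and $B,D$ lie on opposite sides. Combining these, $A$ and $B$ lie on one side (call it the $A$--side) while $C$ and $D$ lie on the other; here the crossing points $a_1=XY\cap AC$ and $a_2=XY\cap AD$ both lie on the arc $XY\subset\C_{XY}$. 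This is the step where the hypothesis that $XY$ crosses $BD$ earns its keep.

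Next I would localize. Let $H$ be the closed hemisphere bounded by $\C_{XY}$ and containing $A$ and $B$ in its interior, with $X\in\partial H$. Because a short geodesic from an interior point of $H$ to a point of $\partial H$ stays in $H$ (in the spirit of Observation~\ref{smallarc}), the arc $BX$ lies in $H$. The arc $AD$ runs from $A\in\operatorname{int}H$ to $D$ outside $H$, so $AD\cap H$ is exactly the subarc $A a_2$; since the crossing point $q=BX\cap AD$ lies in $H$, it lies on $A a_2$, i.e. $q$ is strictly (by general position) between $A$ and $a_2$. Likewise $AC\cap H$ is the subarc $A a_1$, and any crossing of $BX$ with $AC$ must occur there, so it suffices to show that $BX$ crosses $A a_1$.

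Finally I would reduce to the plane. All of $A,B,X,a_1,a_2$ lie in $H$, and $X,a_1,a_2$ lie on the single short arc $XY$; applying the central projection used in Lemma~\ref{lemmaR2} (which sends every great circle to a straight line and preserves crossings) carries the picture to $\R^2$ with $X,a_1,a_2$ collinear in that order. Now consider the triangle $X A a_2$: by the betweenness hypothesis $a_1$ lies on the open side $X a_2$, so $A a_1$ is the cevian from $A$, and $q$ lies on the open side $A a_2$, so the subarc $Xq\subseteq BX$ is part of the cevian from $X$. Two cevians issuing from distinct vertices of a triangle meet in its interior, so $Xq$, and hence $BX$, crosses $A a_1\subseteq AC$, which is the conclusion. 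I expect the main obstacle to be the bookkeeping that makes the spherical picture legitimately planar: one must verify that everything of interest really sits inside the closed hemisphere $H$ and that the projection of Lemma~\ref{lemmaR2} can be invoked even though $X,a_1,a_2$ are collinear, so that the clean cevian argument is genuinely available. Once the configuration is inside $H$, the combinatorial core — which side of $\C_{XY}$ each point falls on, and that $q$ lands between $A$ and $a_2$ — is exactly what lets both the betweenness condition and the crossing of $XY$ with $BD$ do their work.
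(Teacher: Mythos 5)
Your proof is correct, and its first two steps coincide with the paper's own: the paper likewise uses the three crossings with $XY$ to place $A,B$ on one side of $\C_{XY}$ and $C,D$ on the other (the sole use of the hypothesis that $XY$ crosses $BD$), and then observes that $BX\cap AD=BX\cap Aa_2$, which is your point $q$. The difference is the finish. The paper notes that $Xa_2$ crosses $AC$ (since $a_1$ lies between $X$ and $a_2$) and then simply cites Lemma~\ref{lemmaM2} for the five points $X,A,B,C,a_2$, with the crossing point $a_2$ promoted to a vertex: from ``$Xa_2$ crosses $AC$'' and ``$BX$ crosses $Aa_2$'' that lemma yields ``$BX$ crosses $AC$''. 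You instead reprove exactly this instance of Lemma~\ref{lemmaM2} by hand, via central projection and the two-cevian argument in the triangle $XAa_2$. Your route is more elementary and self-contained (no oriented-matroid circuit elimination), but it carries the one debt you flag yourself: Lemma~\ref{lemmaR2} cannot be invoked as stated, because its hypothesis that no three vertices lie on a great circle fails for $X,a_1,a_2\in\C_{XY}$, and central projection needs an open hemisphere while your $H$ is closed. The paper's route sidesteps this, since among $X,A,B,C,a_2$ there is no automatically collinear triple, so Lemma~\ref{lemmaM2} (which already packages the hemisphere reduction via Lemma~\ref{lemmaR2}) applies directly. Your debt is payable in a few lines: the three collinear points all lie on the arc $Xa_2\subset XY$, whose length is less than half of $\C_{XY}$, so rotating $H$ slightly about the axis through a pair of antipodal points of $\C_{XY}$ chosen so that $Xa_2$ lies in one of the open semicircles they bound pushes that arc into the interior while keeping $A$, $B$, $q$ there; all relevant points then lie in an open hemisphere, and the projection map from the proof of Lemma~\ref{lemmaR2} (which needs only this, and which preserves crossings and betweenness along geodesics) makes your cevian argument legitimate. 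Alternatively, once you have $q\in Aa_2$ and $a_1\in Xa_2$, you could finish by citing Lemma~\ref{lemmaM2} verbatim, which is exactly what the paper does.
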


\begin{proof}
The points $A$ and $B$ lie on one side of  $\C_{XY}$, and  $C$ and $D$ on the other.
So $BX \cap AD$ lies on the same side of $\C_{XY}$ as $B$,
which implies $BX \cap AD = BX \cap Aa_2$.
Since $a_1$ is between $X$ and $a_2$,
 $Xa_2$ crosses $AC$.  
Thus, applying Lemma~\ref{lemmaM2} to $X$,  $A$, $B$, $C$, and $a_2$, 
we conclude that $BX$ crosses $AC$. 

\end{proof}

\begin{lemma}
Every linear embedding of $K_{3,3,1}$ has at most one link with linking number 2.
\label{2linking}
\end{lemma}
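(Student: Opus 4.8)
The plan is to argue by contradiction: suppose a linear embedding of $K_{3,3,1}$ has two distinct $2$-component links, each with linking number $2$, and extract from the projected $K_{3,3}$ on the small sphere $S^2$ around vertex~$7$ enough crossing data to apply the non-realizability machinery of Lemmas~\ref{NRLemma}, \ref{CorNR1}, and \ref{CorNR2}. Recall that every nontrivial $2$-component link in $\ko$ uses seven edges and splits as a triangle $O\tilde V\tilde W$ (built from the degree-$6$ vertex $O=7$ and an edge $VW$ of $K_{3,3}$) together with a complementary quadrilateral in $K_{3,3}$; on the sphere, the linking number of such a link is computed from the signed crossings of the edge $VW$ with the four edges of that quadrilateral. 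So a link of linking number~$2$ forces the corresponding edge $VW$ of the immersed $K_{3,3}$ to cross (with matching signs) at least two edges of its complementary quadrilateral, and in fact to have two alternating crossings in the sense used in Lemma~\ref{CorNR1}.

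First I would set up the combinatorics of which pairs of links can simultaneously have linking number~$2$. A link of linking number~$2$ is realized by an edge $VW$ together with the complementary quadrilateral; two such links correspond to two edges $VW$ and $V'W'$ of $K_{3,3}$. I would split into cases according to whether these two edges are disjoint, share a vertex, or are otherwise positioned, and in each case identify the four-edge configuration among $AB$, $BC$, $XY$, $YZ$ to which Lemma~\ref{CorNR1} or Lemma~\ref{NRLemma} applies. The key mechanism is that each linking-number-$2$ condition contributes an edge with \emph{alternating} crossings (condition~(ii) of Lemma~\ref{CorNR1}), and once three of the four edges $AB$, $BC$, $XY$, $YZ$ carry alternating crossings we obtain a non-realizable configuration by the comment following Lemma~\ref{NRLemma}.

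The main obstacle will be the bookkeeping that guarantees the two linking-number-$2$ links actually force three alternating edges among a single quadruple $AB,BC,XY,YZ$ rather than spreading their crossing requirements across unrelated edges. Concretely, I expect to use Lemma~\ref{atmost9} (at most nine crossings) and Lemma~\ref{2edges} (at least two crossing-free edges) to bound and locate the crossings, and Lemma~\ref{lemmaM2} together with Lemma~\ref{4intersections} to propagate crossings and show that the two links' complementary quadrilaterals must share edges in such a way that the hypotheses ``each of $AB$ and $BC$ crosses both $XY$ and $YZ$'' of Lemma~\ref{CorNR1} are met. Once that geometric overlap is established, Lemma~\ref{CorNR1} (with both edges satisfying condition~(ii)) yields the contradiction directly, since three or more of the four edges would then have alternating crossings. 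The delicate step, and the one I'd spend the most care on, is verifying that the second linking-number-$2$ link cannot be ``disjoint enough'' from the first to avoid creating the shared quadruple — i.e., ruling out the case where the two heavy edges and their quadrilaterals interact too weakly to trigger Lemma~\ref{CorNR1}; handling that case will likely require a direct planarization via Lemma~\ref{lemmaR2} and a short oriented-matroid elimination argument in the style of Lemma~\ref{lemmaM2}.
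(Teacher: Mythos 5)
Your proposal has a genuine gap: it is a plan whose decisive step is never carried out, and that step fails as stated. The core difficulty is that a linking-number-2 link does not produce the kind of crossing data your cited lemmas consume. If the triangle $7\tilde V\tilde W$ has linking number~2 with its complementary quadrilateral, then the triangle's disk is pierced exactly twice, by two \emph{opposite} (hence disjoint) edges of the quadrilateral, with equal signs; in the spherical diagram both of these quadrilateral edges pass \emph{over} $VW$ (they lie between vertex~7 and $\tilde V\tilde W$). So the two forced crossings on $VW$ are with disjoint edges and do \emph{not} alternate along $VW$. By contrast, Lemma~\ref{NRLemma} and Lemma~\ref{CorNR1} need $VW$ to cross two \emph{adjacent} edges $XY$ and $YZ$ (alternation is exactly what Lemma~\ref{CorNR1} derives for such adjacent pairs), so the hypothesis ``each of $AB$ and $BC$ crosses both $XY$ and $YZ$'' is never directly supplied by the two linking-number-2 conditions. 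Concretely, take the two triangles to be $712$ and $734$: their complementary quadrilaterals $3456$ and $1256$ share only the edge $56$, and the linking-number-2 conditions can be met with $12$ pierced by the pair $\{45,36\}$ and $34$ pierced by the pair $\{25,16\}$. The forced data is then four crossings among three pairwise disjoint edge-pairs, with no two of the crossed edges adjacent, no common quadruple $AB,BC,XY,YZ$, and nothing for Lemmas~\ref{NRLemma}, \ref{CorNR1}, or \ref{CorNR2} to act on. (The shared-edge case $712$, $714$ has the same defect.) Bridging this would require forcing many additional crossings by propagation in the style of Lemmas~\ref{lemmaM2} and \ref{4intersections}; that is precisely the part you defer (``I expect to use\dots'', ``will likely require\dots''), and it is the whole proof.

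For comparison, the paper proves this lemma by a completely different and, for this purpose, cleaner route: it never projects to the sphere at all. Assuming no four vertices are coplanar, the seven embedded vertices determine a rank-4 oriented matroid in which ``edge $v_4v_5$ pierces triangle $v_1v_2v_3$'' is encoded by the circuit $(v_1v_2v_3,\,v_4v_5)$ and ``$v_5$ lies inside tetrahedron $v_1v_2v_3v_4$'' by the circuit $(v_1v_2v_3v_4,\,v_5)$. Given two linking-number-2 links, each triangle component is pierced exactly twice, by disjoint edges; the proof then splits into the cases you anticipated (triangles sharing an edge, e.g.\ $712$ and $714$, or not, e.g.\ $712$ and $734$, the latter subdivided according to whether edge $34$ pierces triangle $712$), and in each case a short chain of applications of Axiom~C3 (weak elimination) produces either a circuit asserting that a third edge pierces one of the two triangles, or a circuit whose underlying set duplicates that of a known circuit --- a contradiction either way. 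The key resource there is vertex~7 itself: circuits such as $(7123,5)$ involve the cone point and carry rank-4 information that a radial projection from vertex~7 discards. To salvage your diagrammatic approach you would need a new forcing lemma for crossings with disjoint (non-adjacent) edge pairs, or else reproduce the elimination argument; as written, the proposal does not prove the lemma.
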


\begin{proof}
We can assume that no four vertices of the linearly embedded $\ko$ are coplanar.
So this embedding gives an oriented matroid of rank~4 on 7 elements,
where, for any five vertices $v_1, \cdots, v_5$ of $\ko$,
the triangle $v_1 v_2 v_3$ 
(more precisely, the interior of the disk $\disk v_1 v_2 v_3$) 
is pierced by the edge $v_4 v_5$
if and only if $(v_1 v_2 v_3, v_4 v_5)$ is a circuit in the oriented matroid;
and $v_5$ is inside the tetrahedron $v_1 v_2 v_3 v_4$ 
if and only if $(v_1 v_2 v_3 v_4, v_5)$ is a circuit.
In the following, we denote by $\cthree{C_1}{C_2}{e}$ 
the operation of applying Axiom~C3 (see the proof of Lemma~\ref{lemmaM2}) 
to circuits $C_1$ and $C_2$, for edge $e$.

Assume there are two links with linking number 2. 
We have two cases, depending on whether the triangle components of the two links
do or do not share an edge.

\medskip

Case 1: The triangle components of the two links share an edge. 
Say these triangles are 712 and 714.
Since they are components of links with linking number~2,
each of them is pierced by exactly two edges. 
Without loss of generality, we can assume 712 is pierced by edges 34 and 56, so we have the circuits (712, 34) and (712, 56). 
 Since (712, 34) is  a circuit, we cannot have (714, 32) (they have the same underlying set). 
Triangle 714 is therefore pierced by 52 and 36. 
Now, $\cthree{(712, 56)}{(714, 52)}{2}$ yields the circuit (714, 56).
So triangle 714 is pierced by three edges, 56, 52, and 36, which is a contradiction.

\medskip

Case 2: The triangle components of the two links do not share an edge. Say these triangles are 712 and 734.
Up to symmetry, we have two sub-cases.

\medskip

Case 2(a): No edge of one triangle pierces the other triangle; i.e., 34 does not pierce triangle 712 and 12 does not pierce triangle 734. 
As in Case~1, a triangle with linking number 2 is pierced exactly twice.
So 712 is pierced by 36 and 54 only; and 734 by 16 and 52 only.
Thus we have the circuits  (712, 36), (712, 54), (734, 16), and (734, 52),
and no other circuit of the form $(712, ab)$ or $(734, ab)$.

From the second and fourth circuits, $\cthree{(712, 54)}{(734, 52)}{4}$ yields a circuit $C$ with $C^+\subset \{7,1,2,3\}$ and  $C^-\subset \{2, 5\}$.
This circuit can be one of (7123, 5) or (713, 52).
If $C=(7123, 5)$, then $\cthree{(7123, 5)}{(712, 36)}{3}$ yields the circuit (712, 56), which is a contradiction.
So we have $C= (713, 52)$; then $\cthree{(713, 52)}{(712, 36)}{3}$ gives a circuit  $D$ with $D^+\subset \{7,1,2\}$ and  $D^-\subset \{2, 5, 6\}$.
As (712, 56) is ruled out, we must have $D=(71, 256)$. 
On the other hand, $\cthree{(712, 36)}{(734, 16)}{3}$ gives a circuit  $E$ with $E^+\subset \{7,1,2,4\}$ and  $E^-\subset \{1, 6\}$. 
The circuit $E$ can be one of (7124, 6) or (724, 16).
If $E= (7124, 6)$, $\cthree{(7124, 6)}{ (712, 54)}{4}$ gives the circuit  (712, 56) (contradiction).
If $E= (724, 16)$, $\cthree{(724, 16)}{(712, 54)}{4}$ gives a circuit   $F$ with $F^+\subset \{7,1,2\}$ and  $F^-\subset \{1, 5, 6\}$. 
This circuit can be one of (712, 56) or (72, 156). 
But we already have the circuit $D= (71, 256)$ with the same underlying set.
Thus we again have a contradiction.

\medskip

Case 2(b):
One triangle is pierced by one edge of the other triangle. 
So, without loss of generality,
we can assume  triangle 712 is pierced by edge 34.
Recall that a triangle with linking number 2 
is pierced by exactly two disjoint edges.
It follows that
we have the circuits  (712, 34), (712, 56), (734, 16) and (734, 52);
and that triangles 712 and 734 are not pierced by any other edges.

Now,  $\cthree{(712,56)}{(734,16)}{1}$ 
gives a circuit $G$ with $G^+ \subset \{7,2,3,4\}$ and  $G^- \subset \{5,6\}$.
First note that $G^- = \{5\}$ is impossible since ${(7234,5)}$ has the same underlying set as $(734,52)$.
And if $G^- = \{6\}$, then
$\cthree{(7234,6)}{(734,52)}{2}$ gives $(734,56)$,
which is a contradiction.
It follows that $G^- = \{5,6\}$, and hence $G = (abc,56)$ where $\{a,b,c\} \subset \{7,2,3,4\}$.
So $abc = 723, 724, 734$, or $234$.
But $abc \ne 734$ since $(734,56)$ cannot be a circuit.
And if $abc = 723, 724$, or $234$, then $\cthree{(abc,56)}{(734,52)}{2}$ gives $(734,56)$, a contradiction.

\end{proof}

\begin{proposition}
In a linearly embedded $K_{3,3,1}$ with an odd number of nontrivial links,
 all nontrivial links are Hopf links.
In a linearly embedded $K_{3,3,1}$ with an even number of nontrivial links, 
one nontrivial link is a $(2,4)$--torus link and the rest are Hopf links.
\label{linktypes}
\end{proposition}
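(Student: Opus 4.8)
The plan is to analyze each nontrivial 2--component link in a linearly embedded $K_{3,3,1}$ according to its linking number. Every such link consists of two disjoint cycles in the graph; since vertex~7 has degree~6 and the two color classes $\{1,3,5\}$ and $\{2,4,6\}$ each have three vertices, the disjoint cycles in $K_{3,3,1}$ pair a triangle (through vertex~7) with a quadrilateral, giving a total of seven edges. By Lemma~\ref{2linking}, at most one link has linking number~2, and I will argue that every nontrivial link has linking number either~1 or~2; combined with the type identification this yields the two stated cases.

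First I would establish that the linking number of any 2--component link here is at most~2. The triangle component bounds a disk $\disk$ that is pierced by some number $k$ of edges of the quadrilateral; each piercing contributes $\pm 1$ to the linking number, so $|\mathrm{lk}| \le k \le 4$, and since the quadrilateral is a closed curve, $k$ is even, so $k \in \{0,2,4\}$ and the linking number is~$0$ or~$2$ in absolute value when $k\in\{0,2\}$... — more carefully, with four edges the algebraic count is $0$, $\pm 2$, or $\pm 4$ in principle, but I must rule out linking number~$\ge 3$. The clean way is: the linking number equals half the sum of signed crossings in a projection, and with only seven edges the number of crossings between the two components is small; I would show the absolute linking number cannot exceed~2. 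This is where I expect the link to the earlier lemmas: Lemma~\ref{0linking} already handles linking number zero (trivial), and Lemma~\ref{2linking} caps the count of linking-number-2 links at one.

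Next I identify the link types. A 2--component link with $|\mathrm{lk}| = 1$ formed by a triangle and a quadrilateral, with the triangle's disk pierced exactly once (net), is the Hopf link: two convex cycles that pierce each other once can be isotoped to the standard Hopf configuration. A link with $|\mathrm{lk}| = 2$ has the triangle pierced by exactly two disjoint edges of the quadrilateral (as used in the proof of Lemma~\ref{2linking}), and I would show this is precisely the $(2,4)$--torus link: the two piercings of the same sign wind the quadrilateral twice through the triangle, and linearity forces the standard $(2,4)$--torus pattern rather than any more complicated link with the same linking number.

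Finally I assemble the parity argument. By Lemma~\ref{oddtotallinking}, the sum of all linking numbers is odd. If the number of nontrivial links is odd, then since at most one link has linking number~2 (Lemma~\ref{2linking}) and every nontrivial link has linking number~1 or~2, the presence of a linking-number-2 link would make the parity count inconsistent with an odd total over an odd number of links—so all links have linking number~1 and are Hopf links. If the number of nontrivial links is even, then a sum that is odd over an even number of terms forces exactly one term to be~2 (the unique $(2,4)$--torus link) with the rest equal to~1 (Hopf links). I expect the main obstacle to be the type identification in the $|\mathrm{lk}|=2$ case: showing that linearity rigidly forces the $(2,4)$--torus link rather than merely some link of linking number~2 will require the piercing/crossing structure, and I would lean on the fact that both components are linearly embedded convex polygons so their link type is determined by the combinatorics of the piercings.
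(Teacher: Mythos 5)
Your proposal is correct and takes essentially the same route as the paper: it combines Lemma~\ref{oddtotallinking} (odd total linking number), Lemma~\ref{2linking} (at most one link of linking number~2), Lemma~\ref{0linking} (nontrivial implies nonzero linking number), and the bound that a triangle--quadrilateral link has linking number at most~2, followed by the same parity bookkeeping; the paper, like you, asserts rather than proves both this bound and the Hopf/$(2,4)$--torus type identifications. One small slip worth fixing: the number $k$ of piercings of the triangle's disk need not be even (three of the quadrilateral's four crossings with the triangle's plane can lie inside the disk while one lies outside), but your own ``more carefully'' correction supersedes that sentence and the conclusion is unaffected.
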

\begin{proof}
By Lemma \ref{oddtotallinking}, the sum of the linking numbers of all links
in a linearly embedded $K_{3,3,1}$ is odd; and by Lemma~\ref{2linking}, at most one link has linking number~2.
It follows that all the other nontrivial links have linking number~1, 
since in a linearly embedded link consisting of one triangle and one quadrilateral,
a linking number of 3 or higher is not possible.
If there are an odd number of non-trivial links, then all the non-trivial links have linking number~1,
which implies they are Hopf links since they are linearly embedded.
If there are an even number of non-trivial links, then one nontrivial link has linking number~2,
which implies it's a $(2,4)$--torus link since it's linearly embedded,
and the rest are Hopf links.
\end{proof}

% --Lemma about crossings patterns in a K33 with 9 crossings.

\begin{lemma}
A geodesically immersed \ke in $S^2$ with 9 crossings has a unique crossing pattern; i.e., 
up to relabeling its vertices, the 9 crossings come from the following 9 pairs of edges:
$(14, 32)$, $(16, 32)$, $(14, 52)$, $(16, 52)$, 
$(14, 36)$, $(14, 56)$, $(32, 54)$, 
$(32, 56)$, $(36, 54)$;
and each of $12$ and $34$ has no crossings.
\label{9crossings}
\end{lemma}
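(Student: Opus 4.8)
The plan is to reduce the statement to a finite combinatorial problem and then cut down the surviving cases with the geometric crossing lemmas. Since \ke has exactly nine quadrilaterals and, by Lemma~\ref{atmost9}, each quadrilateral contributes at most one crossing, a geodesic immersion with nine crossings has exactly one crossing in every quadrilateral. In the quadrilateral on reds $\{r_1,r_2\}\subseteq\{1,3,5\}$ and blues $\{b_1,b_2\}\subseteq\{2,4,6\}$ the unique crossing is one of the two pairs of opposite edges, i.e.\ one of the two perfect matchings between $\{r_1,r_2\}$ and $\{b_1,b_2\}$; thus the whole immersion is encoded by a single binary choice per quadrilateral. For a fixed pair $\{r_1,r_2\}$ I would orient the three blue vertices by $P\to Q$ whenever $r_1P$ crosses $r_2Q$; Lemma~\ref{lemmaM2} says precisely that this tournament has no directed $3$-cycle, so it is a transitive linear order, and the same holds for every pair in either part. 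A transitive order gives the ``fan'' bookkeeping: within one pair $\{r_1,r_2\}$ the three edges at $r_1$ receive $0,1,2$ crossings and the three at $r_2$ receive $2,1,0$.

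Next I would pin down the crossing-free edges. By Lemma~\ref{2edges} there are at least two, and the fan count shows that a vertex lies on a crossing-free edge only toward the common sink of its two tournaments, so each vertex lies on at most one crossing-free edge; hence crossing-free edges are pairwise non-adjacent. After permuting the vertices within each part (and, if needed, swapping the two parts) I may assume $12$ and $34$ are crossing-free. Their crossing-freeness forces the order $4>6>2$ for the pair $\{1,3\}$ and $3>5>1$ for the pair $\{2,4\}$, and it pins the crossings in the quadrilaterals $\{1,5\}\times\{2,6\}$ and $\{3,5\}\times\{4,6\}$, since in each the alternative matching would use a crossing-free edge. This determines seven of the nine crossings, all matching the claimed list, and leaves only the two quadrilaterals $\{1,5\}\times\{4,6\}$ and $\{3,5\}\times\{2,6\}$.

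The main obstacle is these last two quadrilaterals, and here Lemma~\ref{lemmaM2} is demonstrably insufficient: both remaining choices keep all six tournaments acyclic, yet the ``wrong'' ones yield crossing patterns with a genuinely different crossing-number distribution (for example one in which $12$, $34$, and $56$ are all crossing-free while every other edge has exactly three crossings), so no acyclicity argument can exclude them. Both undetermined choices in fact concern whether the edge $56$ is crossed by $14$ and by $32$, and my plan is to settle them with the finer order-of-crossings input of Lemma~\ref{4intersections}, together with the side-of-great-circle analysis of Observation~\ref{smallarc} and, where five vertices fit in a hemisphere, the planar reduction of Lemma~\ref{lemmaR2}. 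Concretely, along the degree-four edge $14$ the three determined crossings with $32$, $36$, $52$ occur in a fixed order, and one aims to show via Lemma~\ref{4intersections} that the fourth potential crossing (with $56$) is forced, with the symmetric statement for $32$. Getting the hypotheses of Lemma~\ref{4intersections} to line up --- in particular identifying the betweenness of the relevant crossing points --- is the delicate technical core; once the crossings of $14$ and $32$ with $56$ are forced, a direct check confirms that the nine crossings are exactly those listed and that $12$ and $34$ are the crossing-free edges.
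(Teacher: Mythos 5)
Your reduction for the first seven crossings is correct, and it takes a genuinely different route to the waypoint the paper reaches more cheaply: the paper notes that two adjacent crossing-free edges would leave their common quadrilateral with no self-crossing (impossible when all nine quadrilaterals carry exactly one crossing), then reads the seven forced crossings straight off the quadrilateral list; you instead get non-adjacency from the sink/fan count of the six tournaments that Lemma~\ref{lemmaM2} makes transitive. Your further observation that \emph{all four} completions of the two remaining quadrilaterals $\{1,5\}\times\{4,6\}$ and $\{3,5\}\times\{2,6\}$ keep every tournament transitive --- so Lemma~\ref{lemmaM2} can never finish the job --- is correct and insightful.

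However, the proof is incomplete exactly where the real geometric content lies, and the tool you nominate for that step cannot supply it. For the last two quadrilaterals you offer only a plan, and you yourself flag its core (``getting the hypotheses of Lemma~\ref{4intersections} to line up'') as unresolved. In fact that plan is blocked: to conclude ``$56$ crosses $14$'' from Lemma~\ref{4intersections} you must take $\{B,X\}=\{5,6\}$, $\{A,C\}=\{1,4\}$ (or the reverse), and then the requirement that $XY$, $AD$, $BD$ all be graph edges forces, in every admissible assignment, a hypothesis of the form ``$12$ crosses something'' or ``$34$ crosses something'' (e.g.\ $X=6$, $B=5$ forces $A=1$, $C=4$, $Y=3$, $D=2$, requiring $63$ to cross $12$), which is impossible since $12$ and $34$ are crossing-free. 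The same obstruction kills any proof by contradiction: after the seven crossings are fixed, every known non-crossing pair involves $12$ or $34$, so a contradictory conclusion of the lemma would again need $12$ or $34$ in its hypotheses. What actually closes the argument is the hemisphere observation you mention only in passing: the crossings of $14$ with $32$, $52$, and $36$ place $5$ and $6$ on opposite sides of $\C_{14}$, so by Observation~\ref{smallarc} edges $16$ and $54$ lie in different closed hemispheres and meet $\C_{14}$ only at $1$ and $4$ respectively; hence $16\cap 54=\emptyset$, and the unique crossing in $1456$ must be $14\cap 56$. Symmetrically, the crossings of $32$ with $14$, $16$, and $54$ place $5$ and $6$ on opposite sides of $\C_{32}$, forcing $36\cap 52=\emptyset$ and hence $32\cap 56\ne\emptyset$. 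This is the paper's finish; without it (or a correct substitute) your argument establishes only seven of the nine crossings.
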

\begin{proof}
By Lemma~\ref{2edges}, the immersed $K_{3,3}$ has two edges that have no crossings.
These two edges cannot possibly be adjacent,
since if say $12$ and $14$ have no crossings,
then the quadrilateral $1234$  has no self-crossings;
and this is a contradiction since 
\ke has only 9 quadrilaterals and each immersed quadrilateral has at most one self-crossing.

Thus the two edges with no crossings  are disjoint; we can assume they are $12$ and $34$. 
Since every quadrilateral has a self-crossing, we must have the following crossings (the pairs of edges not listed do not cross):

\begin{enumerate}
\item[(1)]
$1234: 14\cap 32 \ne \mt$
\item[(2)]
$1236$: $16\cap 32 \ne \mt$
\item[(3)]
$1254$: $14\cap 52 \ne \mt$
\item[(4)]
$1256$: $16\cap 52\ne \mt$ 
\item[(5)]
$1436$: $14\cap 36  \ne \mt$
\item[(6)]
$1456$: (a) $14\cap 56 \ne \mt$  xor (b) $16\cap 54 \ne \mt$ 
\item[(7)]
$3254$: $32\cap 54 \ne \mt$ 
\item[(8)]
$3256$: (a) $32\cap 56 \ne \mt$  xor (b) $36\cap 52 \ne \mt$ 
\item[(9)]
$3456$: $36\cap 54 \ne \mt$
\end{enumerate}

It remains to show that in each of (6) and (8), the second pair of edges do not cross each other.
The crossings in (1), (3) and (5) imply $5$ and $6$ lie on different sides of $\C_{14}$. 
Thus $16\cap 54 =\mt$, as desired in (6).
\noindent The crossings in (1), (2) and (7) imply $5$ and $6$ lie on different sides of $\C_{32}$. 
Thus $36\cap 52=\mt$, as desired in (8).
\end{proof}

\begin{lemma}
\label{7X-2Adj}
Suppose \ke is geodesically immersed with exactly 7 crossings.
If edges $12$ and $14$ each have no crossings,
then edge $36$ or edge $56$ has no crossings.
\end{lemma}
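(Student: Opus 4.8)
\section*{Proof proposal}

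The plan is to first pin down, by a counting argument, exactly which pairs of edges may cross, thereby isolating a single bad configuration, and then to kill that configuration by a geometric argument on the sphere.

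First I would count crossings quadrilateral by quadrilateral. Each of the nine quadrilaterals of $\ke$ has at most one self-crossing (as in the proof of Lemma~\ref{atmost9}), and the hypothesis that $12$ and $14$ have no crossings annihilates \emph{both} potential crossings of the quadrilaterals $1234$ (namely $12\cap34$ and $14\cap32$) and $1254$, so these two quadrilaterals contribute nothing. Hence, to attain $7$ crossings, the remaining seven quadrilaterals must each contribute exactly one. Reading off $1236,1256,1436,1456$ then forces $16$ to cross each of $32,52,34,56$; more precisely $16\cap32\ne\mt$, $16\cap52\ne\mt$, $16\cap34\ne\mt$, $16\cap54\ne\mt$. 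Meanwhile $3256$ and $3456$ each have their unique crossing in $\{32\cap56,\,36\cap52\}$ and $\{34\cap56,\,36\cap54\}$ respectively. Since the only opposite edges of $36$ are $12,14,52,54$ and of $56$ are $12,14,32,34$, and $12,14$ are crossingless, $36$ can cross only $52$ or $54$ and $56$ only $32$ or $34$. Thus if neither $36$ nor $56$ is crossingless, then either (Case~A) $36\cap52\ne\mt$ and $56\cap34\ne\mt$, or (Case~B) $36\cap54\ne\mt$ and $56\cap32\ne\mt$. The transposition $(2\,4)$ preserves every hypothesis and fixes $36,56$ while interchanging the two cases, so it suffices to derive a contradiction in Case~A.

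Next I would set up the geometry of Case~A. Since $16$ crosses all of $32,34,52,54$, Observation~\ref{smallarc} places $3,5$ on one side of $\C_{16}$ and $2,4$ on the other, with $1,6\in\C_{16}$; consequently $36$ and $56$ lie in the closed hemisphere $P$ containing $3,5$, and the four crossing points of $32,34,52,54$ with the arc $16$ all lie on $\C_{16}$. The direct route is Lemma~\ref{4intersections} applied with $XY=16$: fed with $16\cap32$, $16\cap34$, $16\cap54$ together with $56\cap34$ it yields $56\cap32\ne\mt$, and fed with $16\cap54$, $16\cap52$, $16\cap32$ together with $36\cap52$ it yields $36\cap54\ne\mt$; either conclusion contradicts the uniqueness of the crossing in $3256$ or in $3456$. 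Equivalently, after restricting $36,56$ and the initial segments of $32,34,52,54$ (up to $\C_{16}$) to the hemisphere $P$ and applying the planar reduction of Lemma~\ref{lemmaR2}, Case~A becomes the elementary statement that, with $3,5$ inside a disk and $6$ on its boundary, one cannot simultaneously have $36$ separate $5$ from the foot of $52$ and $56$ separate $3$ from the foot of $34$ --- an angular contradiction at the vertex $6$.

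The hard part will be controlling the order of the four crossing points along the arc $16$: Lemma~\ref{4intersections} needs its betweenness hypothesis ($a_1$ between $X$ and $a_2$) to hold, and only two of the possible orders let it fire directly. I expect to resolve this via the hemisphere/planar reduction just described, where that order becomes the cyclic order of the feet on the boundary circle and the angular contradiction at $6$ holds for every such order; the one technical subtlety is that these feet together with $1,6$ all lie on the single great circle $\C_{16}$, so Lemma~\ref{lemmaR2} must be applied after slightly shrinking the hemisphere (or perturbing), its hypothesis forbidding three vertices on a common great circle. As a cross-check if one insists on using Lemma~\ref{4intersections} alone: when $32\cap54\ne\mt$, one application of Lemma~\ref{lemmaM2} (to $32\cap54$ and $36\cap52$) already yields $36\cap54\ne\mt$, a contradiction, so the delicate ordering argument is genuinely needed only in the residual subcase $34\cap52\ne\mt$.
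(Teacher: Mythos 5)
Your combinatorial half is exactly the paper's: the hypothesis kills both candidate crossings in quadrilaterals $1234$ and $1254$, so the other seven quadrilaterals have exactly one crossing each, $16$ must cross all of $32,34,52,54$, and if both $36$ and $56$ have crossings you land (after the $(2\,4)$ symmetry) in your Case~A --- the paper works with the mirror case $32\cap 56\ne\mt$, $36\cap 54\ne\mt$. Where you diverge is the endgame, and here the paper is much lighter. It stays on the sphere and uses only the observation that an edge having an endpoint on one of the two defining vertices of a great circle cannot cross that great circle: in your labels, $36\cap 52\ne\mt$ puts $5$ and $f_{52}$ on opposite sides of $\C_{36}$, hence $5$ is on the opposite side of $\C_{36}$ from $1$ and all four feet (the whole arc $16\setminus\{6\}$ lies on one side of $\C_{36}$); but then $56\setminus\{6\}$ lies on $5$'s side of $\C_{36}$ while $34\setminus\{3\}$ lies on the feet's side, so these edges cannot meet, contradicting $56\cap 34\ne\mt$. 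No planar reduction, no feet ordering, no case on quadrilateral $3254$ is needed; the paper's version of this takes four lines.

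As written, your endgame has genuine loose ends. The primary route via Lemma~\ref{4intersections} is incomplete, as you yourself note: its betweenness hypothesis is not established and need not hold, so by itself it proves nothing. The fallback does work, but for a reason you should make explicit: under the projection in Lemma~\ref{lemmaR2}, the great circle $\C_{16}$ maps into a straight \emph{line}, so $1$, $6$ and the four feet come out collinear, with $3$, $5$ and all retained arcs on one side. With that collinearity, condition (i) forces the ray $6\to 5$ strictly onto the non-feet side of the line through $3$ and $6$, and condition (ii) forces the symmetric statement with $3$ and $5$ interchanged; these give opposite strict angular orders of $3$ and $5$ around $6$, a contradiction independent of how the feet are ordered. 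If instead the feet sat on a genuine circular arc, as your literal ``disk with $6$ on its boundary'' picture suggests, the two angular conditions at $6$ alone are \emph{not} contradictory, so the collinearity is doing real work and must be stated. Finally, Lemma~\ref{lemmaR2} cannot be cited as stated: six of your points ($1$, $6$, the feet) lie on the common great circle $\C_{16}$, violating its hypothesis, and shrinking the hemisphere does not change that. What you need is the lemma's proof rather than its statement: the retained graph meets $\C_{16}$ only along the arc $16$, of length less than $\pi$, so it can be tilted into an open hemisphere and then centrally projected, which preserves crossings. With those three points repaired your argument is correct, but it reconstructs by heavier means what the paper gets directly from sides of one great circle.
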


\begin{proof}
Since neither $12$ nor $14$ has any crossings,
the quadrilaterals $1234$ and $1254$ have no self-crossings.
So each of the other seven quadrilaterals contains exactly one self-crossing,
as listed below:

\begin{enumerate}
\item $1236$: $16\cap 32 \ne \mt$
\item $1256$: $16\cap 52 \ne \mt$
\item $1436$: $16\cap 34 \ne \mt$
\item $1456$: $16\cap 54 \ne \mt$
\item $3254$: $32\cap 54 \ne \mt$ xor $34\cap 52 \ne \mt$
\item $3256$: $32\cap 56 \ne \mt$ xor {$36\cap 52 \ne \mt$} 
\item $3456$: $36\cap 54 \ne \mt$ xor {$34\cap 56 \ne \mt$}
\end{enumerate}

Now, suppose toward contradiction that $36$ and $56$ each have at least one crossing. 
It follows that, in (6) and (7), 
either $32\cap 56 \ne \mt$ and $36\cap 54 \ne \mt$, or $36\cap 52 \ne \mt$ and $34\cap 56 \ne \mt$.
Since vertex labels $2$ and $4$ are symmetric with respect to the given hypotheses, 
we can without loss of generality assume $32\cap 56 \ne \mt$ in (6) and $36\cap 54 \ne \mt$ in (7). 
Now, $32\cap 56 \ne \mt$ implies $3$ and $2$ are on different sides of $\C_{56}$;  
$36\cap 54 \ne \mt$ implies $3$ and $4$ are on the same side of $\C_{56}$;
and
$16\cap 34 \ne \mt$ implies $1$ is  on the same side of $\C_{56}$ as $3$ and $4$.
It follows that $1$ and $2$ are on different sides of $\C_{56}$, hence  16 and 52 cannot cross each other, contradicting (2).

\end{proof}

\section{Proof of the Main Theorem}

%--------------------- Main theorem ---------------------------

We are now ready to prove Theorem~\ref{maintheorem}.

\begin{proof}
First we show that there exist linear embeddings of $K_{3,3,1}$ with 1, 2, 3, 4, and 5 nontrivial 2--component links.
Figure \ref{12345} shows such embeddings of $K_{3,3,1}$, 
where vertex 7 and the edges incident to it are not drawn. 
Vertex 7 is assumed to be toward the viewer, high above the plane of projection depicted in the figures,
so that each edge incident to vertex~7 is almost vertical.
For each diagram we list the triangles that, together with their respective complementary quadrilaterals,
give a nontrivial link:
(a)~$752$;
(b)~$752$, $754$;
(c)~$714$, $736$, $752$;
(d)~$732$, $734$, $752$, $754$;
(e)~$714$, $732$, $734$, $736$, $752$.
Diagrams (a) and (c) are clearly realizable.
Removing vertex~3 from diagram~(b), 6 from (d), and 4 from (e), gives diagrams that are realizable; 
and the edges incident to each of these vertices all contain only under-strands or only over-strands, 
so the removed vertex and edges can be added back while maintaining realizability.

% picture

\begin{figure}[ht]
\begin{center}
\begin{picture}(360, 240)
\put(0,0){\includegraphics{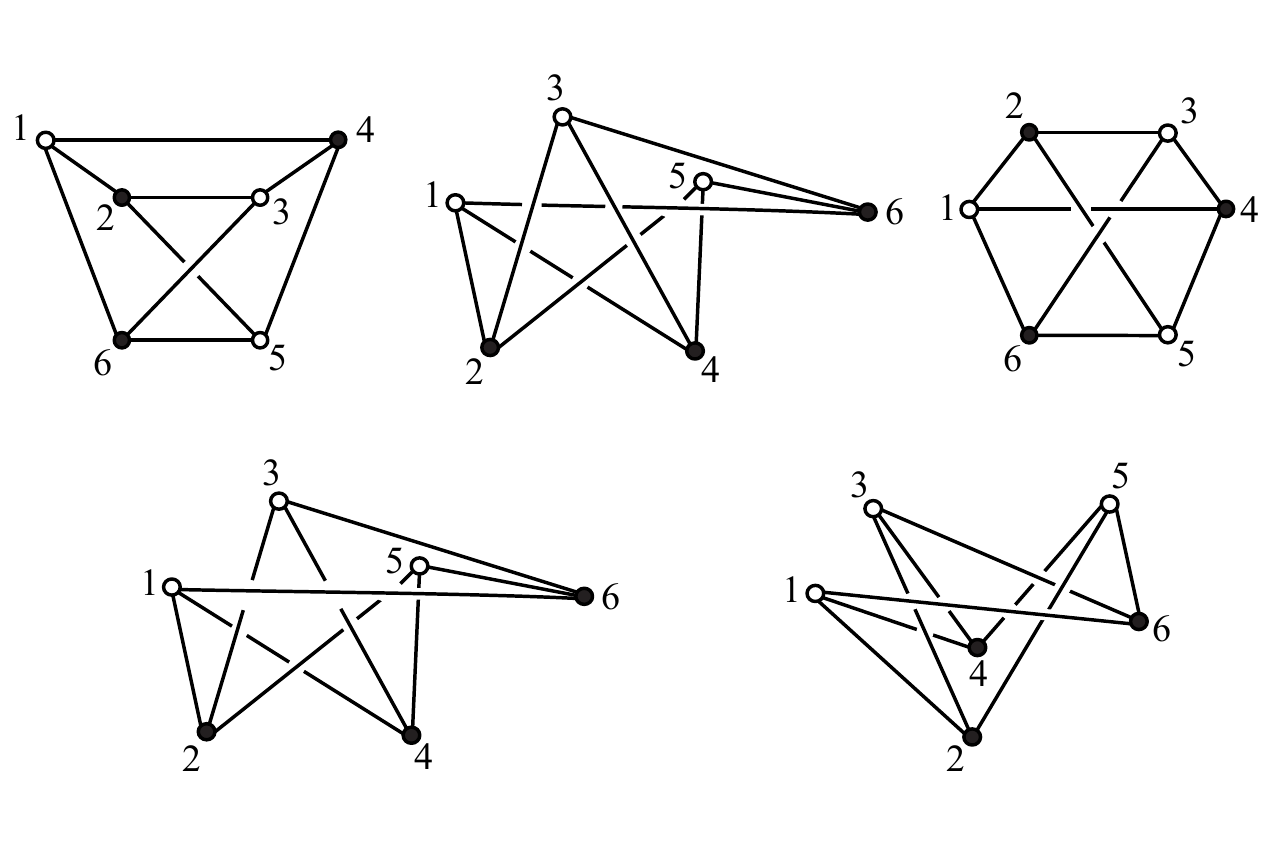}}
\end{picture}
\put(-315,128){(a)}
\put(-190,128){(b)}
\put(-50,128){(c)}
\put(-275,14){(d)}
\put(-85,14){(e)}
\caption{
(a)--(e): $K_{3,3,1}$ with 1, 2, 3, 4, and 5 nontrivial links, respectively.
Vertex~7 of $K_{3,3,1}$ is above the page (not shown).}
\label{12345}
\end{center}
\end{figure}

It remains to show no embedding of $K_{3,3,1}$ can contain more than five nontrivial links.
Given a linear embedding of $\ko$,
let $\calP$ denote the projection of its \ke subgraph
onto a small 2--sphere centered at vertex~7.
Each 2--component link of $K_{3,3,1}$ consists of a triangle containing vertex 7 and its complementary quadrilateral. 
If a triangle is a component of a non-trivial link, its edge disjoint from vertex~7 must have at least one under-strand at a crossing with an edge in the triangle's complementary quadrilateral.
By Lemma \ref{2edges}, at  least two edges of $\calP$ have no crossings.
Therefore, at most seven of the nine triangles in $K_{3,3,1}$ can each be a component of a non-trivial link.
By Lemmas \ref{atmost9} and \ref{oddcrossing}, $\calP$ contains 1, 3, 5, 7 or 9 crossings.
With five or fewer crossings, at most five nontrivial links are possible, as desired.
So, to prove the theorem,  we show that if $\calP$ has 7 or 9 crossings and $K_{3,3,1}$  has 6 or 7 nontrivial links, we reach a contradiction.

%-- Step I -- 9/9----

\bigskip
\noindent
\textbf{Step I.}   Assume $\calP$ has 9 crossings and $K_{3,3,1}$ has 7 nontrivial links.

\medskip

Up to relabeling, the 9 crossings in $\calP$ are as in Lemma~\ref{9crossings}.
For ease of reference, we list these crossings in the table below,
where each edge in the first row crosses the edges listed under it.

\[
\begin{array}{|c|c|c|c|c|c|c|}
\hline 
14 & 16  & 32 & 36 & 52 & 54 & 56  \\
\hline 
32 & 32 & 14 & 14 & 14 & 32 & 14  \\
36 & 52 & 16 & 54 & 16 & 36 & 32  \\
52 & & 54 & & & & \\
56 & & 56   & &  & & \\
\hline
\end{array}
\]

\bigskip

Since edges $12$ and $34$ have no crossings
and seven of the nine triangles in $K_{3,3,1}$ are in nontrivial links,
each of the seven other edges of $\calP$ must contain at least one under-strand.
Now, if an edge $XY$ crosses two adjacent edges only, 
and the triangle $7XY$ links its complementary quadrilateral , 
then $XY$ contains one under-strand and one over-strand,
i.e., it has alternating crossings.
This is the case for edges 16, 36, 52 and 54.

Since edges 36 and 54 each have alternating crossings, 
edge 32 cannot also have alternating crossings with edges 14 and 54, 
or else we get non-realizability via 145 and 236 by Lemma~\ref{NRLemma}.
So the contributions of 14 and 54 to the linking number of 732 with 1456 add up to zero.
But  then, as this linking number is nonzero, 
32 must have alternating crossings with 16 and 56. 
Since 16 also has alternating crossings with 32 and 52, we get nonrealizability by Lemma~\ref {CorNR2},  via path $32561$.

Therefore a $K_{3,3}$ with 9 crossings cannot be the projection of  a $K_{3,3,1}$ with 7 nontrivial links.

%-- Step II -- 9/6----

\medskip
\noindent \textbf{Step II.}  Assume $\calP$ has 9 crossings and $K_{3,3,1}$ has exactly 6 nontrivial links.

\smallskip

We assume $12$ and $34$ have no crossings and we have the edge crossings listed in Step~I.
Since we have 6 nontrivial links, and their total linking number is odd,
one of the links must have linking number~2.
A triangle which is a component of a link with linking number~2  is necessarily pierced by two \textit{disjoint} edges of its complementary quadrilateral.
The only candidates for such a triangle  are $714$, $732$ and $756$, since 14, 32 and 56 each cross two disjoint edges. 

First suppose $756$ has linking number~2 with $1234$.
With $14$ oriented from $1$ to $4$, and $32$ from $3$ to $2$, 
these two edges must pierce $756$ with the same sign to give linking number~2.
So $1$ and $3$ must be on one side of $\C_{56}$, $2$ and $4$ the other.
This implies $16$ does not cross $52$, which is a contradiction.

So only $714$ and $732$ remain as candidate triangles.
Note that the 9 crossings listed in the table above remain the same 
if we switch $1$ with $3$ and $2$ with $4$.
Hence, without loss of generality, we can assume that 
$714$ is the triangle that has linking number~2 with its complementary quadrilateral, $3256$.
This can happen only if two opposite edges of $3256$ cross over $14$, and the other two under  $14$
(three over and  one under would give linking number~1 since we'd have two adjacent over-strands).\\

Case~1: Assume $32 // 14$ and $56 // 14$.
Then  $14 // 52$ and $14 // 36$. 
Now, if $52//16$, then  $\calP$ is nonrealizabe via  the path $41652$.
On the other hand, 
if $16//52$, then $752$ does not link,
since $52$ only crosses two adjacent edges.
But then both $736$ and $754$ must  link,
since we have 6 nontrivial links.
This yields $36//54$ and  $54//32$.
Then $\calP$ is non-realizable via $145$ and $236$.

\medskip

Case~2:  Assume $14 // 32 $ and $14 // 56 $.
This implies  $52 // 14$ and $36 // 14$. 
Then all crossings will be reversed as compared with Case~1,
and we get nonrealizability by the same argument.

Hence no triangle can have linking number~2 with its complementary quadrilateral.
And therefore a \ke  with 9 crossings cannot be the projection of a $\ko$ with exactly 6 nontrivial links.

%-- Step III -- 7/7----

\bigskip

\noindent \textbf{Step III.} Assume $\calP$ has exactly 7 crossings and $K_{3,3,1}$ has 6 or 7 nontrivial links.

\medskip

By Lemma~\ref{2edges}, at least two edges of $\calP$ have no crossings.
First suppose $\calP$ has two adjacent edges that have no crossings; say they are 12 and 14.
Then, by Lemma~\ref {7X-2Adj}, also one of the edges 36 or 56 has no crossings.
Therefore, $\ko$ has only six non-trivial links.
Without loss of generality, assume 36 has no crossings.
Since seven quadrilaterals contain one crossing each, 
we have the following list of pairs of edges that cross each other in each quadrilateral:

\begin{enumerate}
\item $1236$: $16\cap 32 \ne \mt$.
\item $1436$: $16\cap 34 \ne \mt$.
\item $1256$: $16\cap 52 \ne \mt$.
\item $1456$: $16\cap 54 \ne \mt$.
\item $3254$: $32\cap 54 \ne \mt$ xor $34\cap 52 \ne \mt$.
\item $3256$: $32\cap 56 \ne \mt$.
\item $3456$: $34\cap 56 \ne \mt$.
\end{enumerate}

The above list is symmetric with respect to interchanging vertices $2$ and $4$.
So, in line~(5), we can assume $32 \cap 54 \ne \mt$.
Now, since $\ko$ has an even number of nontrivial links,
by Proposition \ref{linktypes},
exactly one of the nontrivial links has linking number~2.
It follows that this configuration is non-realizable by Lemma~\ref{CorNR1}, via 165 and 234: 
edges 34 and 56 satisfy condition~(i) of the lemma, 
and one of edges 16 and 32 satisfies condition~(ii).

So we can assume that $\calP$ has no two adjacent, crossing-less edges;
hence, without loss of generality, edges $12$ and $34$ are crossing-less.
Then the seven crossings are among the pairs of edges in the following set,
which we'll refer to as List~A: 
\begin{enumerate}
\item[(1)]
$1234$: $ 14\cap 32$
\item[(2)]
$1236$: $16\cap 32$
\item[(3)]
$1254$: $14\cap 52$
\item[(4)]
$1256$: $16\cap 52$ 
\item[(5)]
$1436$: $14\cap 36 $
\item[(6)]
$1456$: (a) $14\cap 56 $  xor (b) $16\cap 54 $ 
\item[(7)]
$3254$: $32\cap 54 $ 
\item[(8)]
$3256$: (a) $32\cap 56 $  xor (b) $36\cap 52 $ 
\item[(9)]
$3456$: $36\cap 54 $
\end{enumerate}

We will write $(i)=\mt$ or $(i)\ne\mt$ according to whether or not the pair of edges in line~$(i)$ of List~A cross each other.
For $i = 6$ or $8$, $(i) = \mt$ means both (a) and (b) in line~$(i)$ are $\mt$.
By repeatedly applying Lemma \ref{lemmaM2}, 
we have the following set of implications, which we'll refer to as List~B:

\begin{itemize}
\item  $(1)=\mt$ implies $(2)=\mt$ or $(5)=\mt$.
\item  $(1)=\mt$ implies $(3)=\mt$ or $(7)=\mt$.
\item $(2)=\mt$ implies $(4)=\mt$ or $(8\mathrm{a})=\mt$.
\item  $(3)=\mt$ implies $(4)=\mt$ or $(6\mathrm{a})=\mt$.
\item $(4)=\mt$ implies $(2)=\mt$ or $(8\mathrm{b})=\mt$.
\item $(4)=\mt$ implies $(3)=\mt$ or $(6\mathrm{b})=\mt$.
\item  $(5)=\mt$ implies $(9)=\mt$ or $(6\mathrm{a})=\mt$.
\item  $(7)=\mt$ implies $(9)=\mt$ or $(8\mathrm{a})=\mt$.
\item $(9)=\mt$ implies $(5)=\mt$ or $(6\mathrm{b})=\mt$.
\item $(9)=\mt$ implies $(7)=\mt$ or $(8\mathrm{b})=\mt$.
\end{itemize}

We now divide the rest of Step~III into two parts, according to whether edge $56$ has crossings or not.

\medskip

\textbf{Part~1}. Suppose edge $56$ has a crossing.
Then
$(6\mathrm{a}) \ne \mt$ or $(8\mathrm{a}) \ne \mt$.
Note that List A is symmetric with respect to interchanging vertex 1 with 3 and 2 with 4.
Thus, we can without loss of generality assume $(6\mathrm{a}) \ne \mt$, and hence $(6\mathrm{b}) = \mt$.
Furthermore, the first two lines of List~B imply that if $(1)=\mt$, 
then there are at least three crossing-less quadrilaterals, which contradicts having seven crossings.
Therefore $(1) \ne \mt$.
So we update the above list to the following, which we call List~B1:

\begin{itemize}
\item $(1)\ne \mt$.
\item $(2)=\mt$ implies $(4)=\mt$ or $(8\mathrm{a})=\mt$.
\item  $(3)=\mt$ implies $(4)=\mt$.
\item $(4)=\mt$ implies $(2)=\mt$ or $(8\mathrm{b})=\mt$.
\item  $(5)=\mt$ implies $(9)=\mt$.
\item $(6\mathrm{a}) \ne \mt$; $(6\mathrm{b}) = \mt$.
\item  $(7)=\mt$ implies $(9)=\mt$ or $(8\mathrm{a})=\mt$.
\item $(9)=\mt$ implies $(7)=\mt$ or $(8\mathrm{b})=\mt$.
\end{itemize}

We have three cases, according to whether $(8)=\mt$, $(8\mathrm{a}) \ne \mt$, or $(8\mathrm{b}) \ne \mt$. 

\medskip

Case~1: $(8) = \mt$.
Since $\calP$ has seven crossings, 
there must be exactly one crossing-less quadrilateral other than the one in (8).
Hence, since $(3)=\mt$ implies $(4)=\mt$, we have $(3) \ne \mt$;
and since $(5)=\mt$ implies $(9)=\mt$, we have $(5) \ne \mt$. 
So exactly one of (2), (4), (7) or (9) is $\mt$.
This gives us the following four subcases.
 
\medskip

Case~1(a): $(2) =\mt$. 
This configuration is non-realizable by Lemma ~\ref{CorNR1}, via 236 and 145,
with the following justification: 
If $\ko$ has seven nontrivial links,
then each of 732, 736, and 754 is linked,
and edges 32, 36 and 54 satisfy condition~(i) of the lemma. 
If $\ko$ has only six nontrivial links,
then at least two of edges 32, 36 and 54 satisfy condition~(i); 
and edge 14 satisfies condition~(ii) 
since it is the only edge which crosses two disjoint edges 
(and hence 714 is the triangle component of the link with linking number~2).

\medskip

Case~1(b): $(4) =\mt$. 
Observe that $(1) \ne \mt$, $(3) \ne \mt$ and $(5) \ne \mt$  together imply that vertices  3 and 5 lie on one side of  $\C_{14}$, 2 and 6 on the other.
Let  $a= 14\cap 32$,  and $b= 14\cap 52$.
Since $16\cap 32 \ne \mt$ and $52\cap 16 = \mt$, applying Lemma \ref{4intersections} with $X=1$, $Y=4$, $A=2$, $B=6$, $C=5$, and $D=3$ implies that $b$ cannot be between $1$ and $a$ on edge $14$.
Hence  $a$ is between 1 and $b$.
This implies 3 and 4 are on different sides of $\C_{52}$, and therefore 32 cannot cross 54, which contradicts $(7) \ne \mt$.

\medskip

Case~1(c): $(7) =\mt$. 
This configuration is non-realizable by Lemma ~\ref{CorNR1}, via 325 and 416,
with the following justification: 
If $\ko$ has seven nontrivial links,
then each of 716, 732, and 752 is linked,
and edges 16, 32 and 52 satisfy condition~(i). 
If $\ko$ has only six nontrivial links,
then at least two of edges 16, 32 and 52 satisfy condition~(i); 
and edge 14 satisfies condition~(ii) (as in Case~1(a)).

\medskip 

Case~1(d): $(9)= \mt$. 
As in Case~1(b), $(1) \ne \mt$, $(3) \ne \mt$ and $(5) \ne \mt$  together imply that vertices  3 and 5 lie on one side of  $\C_{14}$, 2 and 6 on the other.
Let  $a= 14\cap 32$,  and $b= 14\cap 36$.
Since $54\cap 32 \ne \mt$ and $54\cap 36 = \mt$,  applying Lemma \ref{4intersections} 
with $X=4$, $Y=1$, $A=3$, $B=5$, $C=6$, and $D=2$ implies that 
$b$ is between 1 and $a$ on edge $14$.
This implies 1 and 6 are on the same side of $\C_{32}$, and hence 16 cannot cross 32, which contradicts $(2) \ne \mt$.

\medskip

Case~2: $(8\mathrm{a}) \ne \mt$.
Then we obtain the following implications from List~B1:
$(2)=\mt$ implies $(4)=\mt$; 
$(3)=\mt$ implies $(4)=\mt$; 
$(5)=\mt$ implies $(9)=\mt$; and  
$(7)=\mt$ implies $(9)=\mt$.
Since none of (1), (6), and (8) is $\mt$, and $\calP$ has seven crossings, 
there must be exactly two crossing-less quadrilaterals among  (2), (3), (4), (5), (7) and (9).
And the only pairs not ruled out by the above are:
(2) and (4); (3) and (4); (4) and (9); (5) and (9); (7) and (9).
Furthermore, if $(4) = \mt$ and $(9) = \mt$,
then we have $(1) \ne \mt$, $(2)\ne \mt$, $(3) \ne \mt$, and $(5) \ne \mt$
(since at most two quadrilaterals can be crossing-less);
hence the argument in Case~1(b) applies, giving a contradiction.
We're only left with four pairs:
(2) and (4); (3) and (4);  (5) and (9);  (7) and (9).

Now, if $\ko$ has seven nontrivial links,
then we can rule out all four pairs listed above 
since each of edges 16, 52, 36 and 54 has at least one crossing.
On the other hand, if $\ko$ has only six nontrivial links, we rule out the four pairs as follows.
If [$(2)= \mt $ and $(4)=\mt$] or [$(3)= \mt$ and $(4)=\mt$],
we get non-realizability by Lemma~\ref{CorNR1}, via 145 and 236, 
where edges 36 and 54 satisfy condition~(i) and one of edges 14 or 32 satisfies condition~(ii).
If [$(5)= \mt$ and $(9)=\mt$] or  [$(7) =\mt$ and $(9)=\mt$],
we get non-realizabilty by Lemma~\ref{CorNR1}, via 325 and 416, 
where edges 16 and 52 satisfy condition~(i) and one of the edges 14 or 32 satisfies condition~(ii).

\medskip

Case~3: $(8\mathrm{b}) \ne \mt$.
Then we obtain the following implications from List B1:  
$(3)=\mt$ implies $(4)=\mt$;  
$(4)=\mt$ implies $(2)=\mt$; 
$(5)=\mt$ implies $(9)=\mt$; and  
$(9)=\mt$ implies $(7)=\mt$.
Since none of (1), (6), and (8) is $\mt$, and $\calP$ has seven crossings, there must be a pair of crossing-less quadrilaterals among  (2), (3), (4), (5), (7) and (9).
And only three pairs are not ruled out by the above: (2) and (4); (2) and (7); (7) and (9).

We rule out the second pair, (2) and (7), as follows.
As we've seen before, $(1) \ne \mt$, $(3) \ne \mt$ and $(5) \ne \mt$  together imply that 
vertices  3 and 5 lie on one side of  $\C_{14}$, 2 and 6 on the other.
Let $a= 14\cap 32$ and  $b= 14\cap 52 $.
Since $16\cap 32 = \mt$ but $16\cap 52 \ne \mt$,
applying Lemma \ref{4intersections} with 
$X=1$, $Y=4$, $A=2$, $B=6$, $C=3$, and $D=5$
implies that $b$ is between 1 and $a$ on edge $14$.
Also, since $32\cap 54 = \mt$ but $36\cap 54 \ne \mt$,
letting $c =14\cap 36 $ and 
applying Lemma \ref{4intersections} with 
$X=4$, $Y=1$, $A=3$, $B=5$, $C=2$, and $D=6$
implies that $c$ is between 4 and $a$ on edge $14$.
It follows that edges 52 and 36 lie on different sides of $\C_{32}$ and therefore cannot cross each other,
contradicting $(8\mathrm{b}) \ne \mt$.

We rule out the first and third pairs,  [$(2) =\phi$ and $(4)=\phi$]  and [$(7) =\phi$ and $(9)=\phi$], as follows.
First suppose $K_{3,3,1}$ has seven nontrivial links.
Then edges 16 and 54 each have at least one crossing,
which rules out the first and third pairs, respectively.
Now suppose $\ko$ has only six nontrivial links.
If $(2)= \mt $ and $(4)=\mt$,
then this configuration is non-realizable by Lemma~\ref{CorNR1}, via 145 and 236, 
where edges 32 and 54 satisfy condition~(i) and one of the edges 14 or 36 satisfies condition~(ii).
If $(7)= \mt$ and $(9)=\mt$,
then this configuration is non-realizable by Lemma~\ref{CorNR1}, via 416 and 325, where edges 16 and 32 satisfy condition~(i) and one of edges 14 or 52 satisfies condition~(ii).

\medskip

\textbf{Part~2.} Suppose edge 56 has no crossings. 
Then we have $(8\mathrm{a})=\mt$ and $(6\mathrm{a})=\mt$;
and $\ko$ must have exactly six nontrivial links.
List~B becomes: 

\begin{itemize}
\item  $(1)=\mt$ implies $(2)=\mt$ or $(5)=\mt$.
\item  $(1)=\mt$ implies $(3)=\mt$ or $(7)=\mt$.
\item  $(4)=\mt$ implies $(2)=\mt$ or $(8)=\mt$.
\item  $(4)=\mt$ implies $(3)=\mt$ or $(6)=\mt$.
\item  $(9)=\mt$ implies $(5)=\mt$ or $(6)=\mt$.
\item  $(9)=\mt$ implies $(7)=\mt$ or $(8)=\mt$.
\end{itemize}

Since exactly two quadrilaterals are crossing-less, 
none of (1), (4) and (9) is $\mt$.
Now, given that $(1) \ne \mt$, if $(3)$ and $(5)$ were both $\ne \mt$, 
then vertices  3 and 5 would lie on one side of  $\C_{14}$, 2 and 6 on the other.
So 54 could not cross 16, and we'd have $(6) =\mt$.
Using a similar reasoning, we obtain the following 
(we're listing the pair (3), (5) again for ease of reference):
\begin{itemize}
\item
$(2),(6) \ne \mt$ implies $(5)=\mt$, since $(4) \ne  \mt$.
\item
$(2), (7)\ne \mt$ implies $(8)=\mt$, since $(1) \ne \mt$.
\item
 $(3), (5) \ne \mt $ implies $(6) = \mt$, since $(1) \ne \mt$.
\item
$(3), (8)\ne \mt$ implies $(7)=\mt$, since $(4) \ne \mt$.
\item
$(5), (8) \ne \mt$ implies $(2)=\mt$, since $(9) \ne \mt$.
\item
$(6), (7) \ne \mt$ implies $(3)=\mt$, since $(9) \ne \mt$.
\end{itemize}

Since none of (1), (4) and (9) is $\mt$
and exactly two quadrilaterals are crossing-less,
exactly two of (2), (3), (5), (6), (7) and (8) are $\mt$.
This gives fifteen possible pairs of crossing-less quadrilaterals.
Twelve of them are ruled out by the above list;
for example, if (2) and (5) are $\mt$, then,  for all $i \not \in \{2,5\}$, $(i) \ne \mt$; so
by the last line in the above list,
we get $(3) = \mt$, which is a contradiction.

The only remaining pairs are (2), (3); (5), (7); and (6), (8).
The first pair gives a non-realizable configuration by Lemma ~\ref{CorNR1}, 
via 145 and 236, where edges 14 and 32 satisfy condition~(i) and one of edges 36 or 54 satisfies condition~(ii).
The second pair gives a non-realizable configuration by Lemma ~\ref{CorNR1},
via 416 and 325, where edges 14 and 32 satisfy condition~(i) and one of edges 16 or 52 satisfies condition~(ii).
And the third pair gives a non-realizable configuration by Lemma ~\ref{CorNR1},
via 145 and 236, where edges 36 or 54 satisfy condition~(i) and one of edges 14 or 32 satisfies condition~(ii).

We conclude that a linear $\ko$ cannot have more than five nontrivial links.

\end{proof}

\bibliographystyle{amsplain}

\begin{thebibliography}{99}

\bibitem{BLSWZB} 
B. Bj\"{o}rner, M. Las Vergnas, B. Sturmfels, N. White, and G. Ziegler.
 \textit{Oriented Matroids}.
Ecyclopedia of Mathematics and its Applications. Addison-Wesley, Reading, MA. 1993.



\bibitem{CG}
J.\ Conway, C.\ Gordon.
\bibtitle{Knots and links in spatial graphs}.
 J.\  Graph Theory \textbf{7} (1983) 445--453.
 
 
\bibitem{FlMe} 
 T. Fleming, B. Mellor.
 \bibtitle{Counting links in complete graphs}, preprint 2006. arXiv:math/0611626v1
 
\bibitem{Hu} 
C. Hughes. 
 \bibtitle{Linked triangle pairs in a straight edge embedding of $K_6$}, $\Pi M E$ Journal \textbf{12} (2006), No. 4, 213--218.
 
 \bibitem{Hu2}
 Y. Huh.
 \bibtitle{Knotted Hamiltonian cycles in linear embedding of $K_7$ into $\mathbb{R}^3$}, preprint 2011. arXiv:1103.1010v1
 
 
\bibitem{HuJe} 
Y. Huh, C.\ B.\ Jeon.
\bibtitle{Knots and links in linear embeddings of $K_6$}, J. Korean Math. Soc. \textbf{44} (2007), No. 3, 661--671.

\bibitem{NaPa}
R. Naimi, E. Pavelescu. 
\bibtitle{Linear embeddings of $K_9$ are triple linked}, preprint 2012. arXiv:1202.1613v3

\bibitem{RA} J. L. Ramirez Alfonsin. \textit{Spatial graphs and oriented matroids: the trefoil,} Discrete and Computational Geometry \textbf {22} (1999) 149--158.


\bibitem{RST}
N.\ Robertson, P.\ Seymour, R.\ Thomas.
\bibtitle{Sachs' linkless embedding conjecture}.
J.\ Combin.\ Theory Ser. B, \textbf{64} (1995) 185--227.

\bibitem{Sa1} 
H. Sachs.
\bibtitle{On a spatial analogue of Kuratowski's theorem on planar graphs---an open problem}. 
Graph theory (\L agow, 
1981), 230--241, Lecture Notes in Math. {\bf 1018}, Springer, 
Berlin, (1983).

\bibitem{Sa2} 
H. Sachs. 
\bibtitle{On spatial representations of finite graphs}.
Finite and infinite sets, Vol. I, II (Eger, 1981), 649--662,
Colloq. Math. Soc. J\'anos Bolyai {\bf 37},
North-Holland, Amsterdam (1984).

\end{thebibliography}

\end{document}